\newtheorem{theorem}{Theorem}
\newtheorem{corollary}[theorem]{Corollary}
\newtheorem{definition}[theorem]{Definition}
\newtheorem{example}[theorem]{Example}
\newtheorem{lemma}[theorem]{Lemma}
\newtheorem{proposition}[theorem]{Proposition}
\newtheorem{remark}[theorem]{Remark}
\begin{document}

\title{A topological approach to non-Archimedean mathematics}
\author{Vieri Benci\thanks{
Dipartimento di Matematica, Universit\`{a} degli Studi di Pisa, Via F.
Buonarroti 1/c, 56127 Pisa, ITALY; e-mail: \texttt{benci@dma.unipi.it }and
Centro Linceo Interdisciplinare "Beniamino Segre".} \and Lorenzo Luperi
Baglini\thanks{%
University of Vienna, Faculty of Mathematics, Oskar-Morgenstern-Platz 1,
1090 Vienna, AUSTRIA, e-mail: \texttt{lorenzo.luperi.baglini@univie.ac.at},
supported by grants P25311-N25 and M1876-N35 of the Austrian Science Fund
FWF.}}
\maketitle

\begin{abstract}
Non-Archimedean mathematics (in particular, nonstandard analysis) allows to
construct some useful models to study certain phenomena arising in PDE's;
for example, it allows to construct generalized solutions of differential
equations and variational problems that have no classical solution. In this
paper we introduce certain notions of non-Archimedean mathematics (in
particular, of nonstandard analysis) by means of an elementary topological
approach; in particular, we construct non-Archimedean extensions of the
reals as appropriate topological completions of $\mathbb{R}$. Our approach
is based on the notion of $\Lambda $-limit for real functions, and it is
called $\Lambda $-theory. It can be seen as a topological generalization of
the $\alpha $-theory presented in \cite{BDN2003}, and as an alternative
topological presentation of the ultrapower construction of nonstandard
extensions (in the sense of \cite{keisler}). To motivate the use of $\Lambda 
$-theory for applications we show how to use it to solve a minimization
problem of calculus of variations (that does not have classical solutions)
by means of a particular family of generalized functions, called
ultrafunctions.

\textbf{Keywords}: non-Archimedean mathematics, Nonstandard Analysis, limits
of functions, generalized functions, ultrafunctions.
\end{abstract}

\tableofcontents

\section{Introduction}

In a previous series of papers (\cite{ultra}, \cite{belu2012}, \cite%
{belu2013}, \cite{milano}, \cite{algebra}, \cite{beyond}, \cite{gauss}) we
have introduced and studied a new family of generalized functions called
ultrafunctions and its applications to certain problems in mathematical
analysis, including some applications to PDE's in \cite{gauss}. The
development of a rigorous study of (a large class of) PDE's in ultrafunction
theory is the object of \cite{Burgers}, where we exemplify our approach by
studying in detail Burgers' equation. Henceforth, it is our feeling that
many problems in PDE's theory could be fruitfully studied by means of the
theory of ultrafunctions.

However, one might have the impression that a drawback of our approach is
the use of the machinery of NSA, which is not a "common working tool" for
most analysts. Even if NSA has already been applied to many different fields
of mathematics (such as functional analysis, probability theory,
combinatorial number theory, mathematical physics and so on) to obtain
important results, the original formalism of Robinson, based on model theory
(see e.g. \cite{rob}), appears too technical to many researchers, and not
directly usable by most mathematicians. Since Robinson's work first
appeared, a simpler semantic approach (due to Robinson himself and Elias
Zakon) has been developed using the purely set-theoretic notion of
superstructure (see \cite{RZ}); we recall also the pioneering work by W.A.J.
Luxemburg (see \cite{Lux}), where a direct use of ultrapowers was made (see 
\cite{BDN2003}, \cite{BDNF06} for a complete presentation of alternative
simplified approaches to NSA). However, many researcher working in NSA have
the feeling that also these technical notions are not needed in order to
carry out calculus with actual infinitesimals, as well as to carry out
several applications of NSA. As a consequence, there have been many attempts
to simplify and popularize NSA by means of simplified presentations. We
recall here in particular the approaches of Henson \cite{Henson}, Keisler 
\cite{keisler} and Nelson \cite{nelson}; other attempts have been made by
Benci, Di Nasso and Forti with algebraic (see \cite{benci95}, \cite{benci99}%
, \cite{BDN2005}, \cite{F}) and topological approaches (see \cite{BDNF06}, 
\cite{DNF}). We also suggest \cite{loeb} where NSA is introduced in a
simplified way suitable for many applications. In our previous papers, we
tried to address the same issue by means of $\Lambda$-limits (see e.g. \cite%
{milano} for an axiomatic presentation of this approach to NSA). The basic
idea of $\Lambda$-limits is to present nonstandard objects as limits of
standard ones. However, in our previous works the word "limits" was not
intended in a topological sense: the "limits" where defined axiomatically
and no explicit topology was involved in the constructions.

The main aim of this paper is to show that, actually, $\Lambda$-limits can
be precisely characterized as topological limits. This approach will be
called $\Lambda$-theory; it allows to construct a topological approach to
NSA (related to but different from the approach of Benci, Di Nasso and Forti
in \cite{BDNF06}, \cite{DNF}) that, in our opinion, is well-suited for
researchers that are not experts in NSA and are interested to use certain
non-Archimedean arguments to study problems in analysis. In fact, it is our
feeling that presenting nonstandard constructions and results by means of a
topological approach might help such researchers to use them. For example,
we construct extensions of the reals (in the sense of NSA) as appropriate
topological completions of $\mathbb{R}$.

$\Lambda $-theory can be seen as a topological generalization of the $\alpha 
$-theory presented in \cite{BDN2003}. The idea behind our approach is to
embed $\mathbb{R}$ in particular Hausdorff topological spaces in which it is
possible to formalize the intuitive idea of hyperreals as topological limits
(in a sense that we will precise in Section \ref{OL}) of real functions.
From this point of view, our construction of the hyperreals starting from $%
\mathbb{R}$ shares some features with the construction of $\mathbb{R}$ as
the Cauchy completion of $\mathbb{Q}$. We also extend our construction to
define a topology on the superstructure $V(\mathbb{R})$ on $\mathbb{R}$,
that we use to define $\Lambda $-limits of bounded functions defined on $V(%
\mathbb{R}).$ Our construction is substantially equivalent to the
ultrapowers approach, and we will prove in Section \ref{comparison} that
within $\Lambda -$theory it is possible to construct a nonstandard universe
in the sense of \cite{keisler}. To motivate our feeling that $\Lambda$%
-theory can be fruitfully applied to study certain problems in Analysis, in
Section \ref{GS} we apply $\Lambda $-theory to solve a minimization problem
of calculus of variations that does not have classical solutions.

We want to remark that readers expert in NSA will easily recognize that $%
\Lambda $-theory is essentially equivalent to the ultrapowers construction
(we prove this fact in Section \ref{comparison}). Anyhow, in this paper, we
do not assume the knowledge of NSA by the reader.

\section{$\Lambda $-theory\label{lt}}

\subsection{The $\Lambda $-limit\label{OL}}

The only technical notion that we need to develop our approach to
non-Archimedean mathematics is that of ultrafilter:

\begin{definition}
Let $X$ be a set. An ultrafilter $\mathcal{U}$ on $X$ is a family of subsets
of $X$ that has the following properties:

\begin{enumerate}
\item $X\in \mathcal{U},$ $\emptyset \notin \mathcal{U};$

\item for every $A,B\subseteq X$ if $A\in \mathcal{U}$ and $A\subseteq B$
then $B\in \mathcal{U};$

\item for every $A,B\in \mathcal{U}$, $A\cap B\in \mathcal{U};$

\item for every $A\subseteq X$ we have that $A\in \mathcal{U}$ or $A^{c}\in 
\mathcal{U}.$
\end{enumerate}
\end{definition}

An ultrafiler $\mathcal{U}$ on $X$ is principal if there exists an element $%
x\in X$ such that $\mathcal{U}=\{A\subseteq X\mid x\in A\}.$ An ultrafilter
is free if it is not principal. From now on we let $\mathfrak{L}$ be an
infinite set equipped with a free ultrafilter $\mathcal{U}$. Every set $Q\in 
\mathcal{U}$ will be called \textbf{qualified set}. We will say that a
property $P$ is \textbf{eventually} true for the function $\varphi (\lambda
) $ if it is true for every $\lambda $ in a qualified set, namely if there
exists $Q\in \mathcal{U}$ such that $P(\varphi (\lambda ))$ holds for every $%
\lambda \in Q.$ We let $\Lambda \notin \mathfrak{L}$ and we consider the
space $\mathfrak{L}\cup \left\{ \Lambda \right\} .$ We equip $\mathfrak{L}%
\cup \left\{ \Lambda \right\} $ with a topology in which the neighborhoods
of $\Lambda $ are of the form $\left\{ \Lambda \right\} \cup Q,\ Q\in 
\mathcal{U}$. In this sense, one can imagine $\Lambda $ as being a "point at
infinity" for $\mathfrak{L}$ (in this sense, it plays a similar role to that
of $\alpha $ in the Alpha-Theory, see \cite{BDN2003}). With respect to this
topology, the notion of limit of a function at $\Lambda $ is specified as
follows:

\begin{definition}
\label{filter}Let $(X,\tau )$ be a Hausdorff topological space, let $%
x_{0}\in X$ and let $\varphi :\mathfrak{L}\rightarrow X$ be a function. We
say that $x_{0}$ is the $\Lambda $-limit of the function $\varphi $, and we
write%
\begin{equation}
\lim_{\lambda \rightarrow \Lambda }\varphi (\lambda )=x_{0},  \label{blu}
\end{equation}%
if for every neighborhood $V$ of $x_{0}$ the function $\varphi $ is
eventually in $V,$ namely if there is a qualified set $Q$ such that $\varphi
(Q)\subset V.$
\end{definition}

\begin{remark}
We use the notation $\lim_{\lambda \rightarrow \Lambda }\varphi (\lambda )$
since, as we already noticed, one may think to $\Lambda \notin \mathfrak{L}$
as a "point at $\infty $" and to the sets in $\mathcal{U}$ as neighborhoods
of $\Lambda $; it is conceptually similar to the point $\infty $ when one
considers $\mathbb{R}\cup \left\{ +\infty \right\} $. We prefer to use the
symbol $\Lambda $ rather than $\infty $ since one may think of $\Lambda $ as
a function of $\mathcal{U}$, namely $\Lambda =\Lambda \left( \mathcal{U}%
\right) $. Thus the explicit mention of $\Lambda $ is a reminder that the $%
\Lambda $-limit depends on $\mathcal{U}.$
\end{remark}

\begin{remark}
Another way to look at the limit (\ref{blu}) is to consider the Stone-\v{C}%
ech compactification $\beta \mathfrak{L}$ of $\mathfrak{L}$ with the
relative topology and to think to $\Lambda \in \beta \mathfrak{L}$ as of a
nontrivial element of this compactification.
\end{remark}

Limits as given by equation (\ref{blu}) will be called $\Lambda -$limits,
and we will call $\Lambda $-theory the approach to non-Archimedean
mathematics based on the notion of $\Lambda $-limit.

Our main result is the following:

\begin{theorem}
\label{nuovo}\textit{There exists a Hausdorff topological space }$\left( 
\mathbb{R}_{\mathfrak{L}},\tau \right) $ \textit{such that }

\begin{enumerate}
\item \label{uno}$\mathbb{R}_{\mathfrak{L}}=cl_{\tau }\left( \mathfrak{L}%
\times \mathbb{R}\right) ;$

\item \label{quattro}$\mathbb{R\subseteq R}_{\mathfrak{L}}\ $and $\forall
c\in \mathbb{R}$ 
\begin{equation*}
\lim_{\lambda \rightarrow \Lambda }\left( \lambda ,c\right) =c;
\end{equation*}

\item \label{tre}\textit{for every function }$\varphi :\mathfrak{L}%
\rightarrow \mathbb{R}$, the limit 
\begin{equation*}
\lim_{\lambda \rightarrow \Lambda }\left( \lambda ,\varphi (\lambda )\right)
\end{equation*}%
exists in $\left( \mathbb{R}_{\mathfrak{L}},\tau \right) $;

\item \label{sei}two functions are eventually equal if and only if%
\begin{equation*}
\lim_{\lambda \rightarrow \Lambda }\left( \lambda ,\varphi (\lambda )\right)
=\lim_{\lambda \rightarrow \Lambda }\left( \lambda ,\psi (\lambda )\right) .
\end{equation*}
\end{enumerate}
\end{theorem}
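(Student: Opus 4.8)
The plan is to realize $\mathbb{R}_{\mathfrak{L}}$ as a quotient of the ultrapower $\mathbb{R}^{\mathfrak{L}}$, equipped with a topology that makes the desired limits work, but presented purely topologically so that the construction looks like a completion of $\mathbb{R}$. Concretely, I would proceed as follows.

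First I would define the underlying set. On the set of functions $\mathfrak{L}\to\mathbb{R}$ introduce the equivalence relation $\varphi\sim\psi$ iff $\varphi$ and $\psi$ are eventually equal (i.e. $\{\lambda:\varphi(\lambda)=\psi(\lambda)\}\in\mathcal{U}$); this is an equivalence relation precisely because $\mathcal{U}$ is a filter. Let $\mathbb{R}_{\mathfrak{L}}$ be the quotient, write $[\varphi]$ for the class of $\varphi$, and embed $\mathbb{R}$ via $c\mapsto[\text{const}_c]$ (injective because $\mathcal{U}$ is free, hence nonprincipal, so two distinct constants are never eventually equal — actually distinct constants are \emph{never} equal anywhere, so any ultrafilter works here; freeness matters elsewhere). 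One should also include the image of $\mathfrak{L}\times\mathbb{R}$: map $(\lambda_0,c)$ to the class of the function that is $c$ at $\lambda_0$ and, say, $0$ elsewhere — or more cleanly, think of $\mathfrak{L}\times\mathbb{R}$ as sitting inside the space of functions $\mathfrak{L}\to\mathbb{R}$ by sending $(\lambda_0,c)$ to the ``Dirac-type'' function, but it is cleaner still to just let the topology on $\mathbb{R}_{\mathfrak{L}}$ be defined so that $\lim_{\lambda\to\Lambda}(\lambda,\varphi(\lambda))=[\varphi]$ by fiat and then check this is consistent and Hausdorff.

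Second, the topology. Declare a set $U\subseteq\mathbb{R}_{\mathfrak{L}}$ to be open iff for every $\varphi:\mathfrak{L}\to\mathbb{R}$ with $[\varphi]\in U$, there is a qualified set $Q$ and, for each $\lambda\in Q$, a real open interval $I_\lambda\ni\varphi(\lambda)$ such that the ``tube'' $\{(\lambda,t):\lambda\in Q,\ t\in I_\lambda\}$ maps into $U$; equivalently, take as a neighborhood base at $[\varphi]$ the sets $N(\varphi,Q,\epsilon)=\{[\psi]:\psi(\lambda)\in(\varphi(\lambda)-\epsilon(\lambda),\varphi(\lambda)+\epsilon(\lambda))\text{ eventually}\}$ for $Q\in\mathcal{U}$ and positive functions $\epsilon$. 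I would verify this is a legitimate topology (closure under finite intersections uses that $\mathcal{U}$ is closed under finite intersections). Property \ref{uno}, that $\mathbb{R}_{\mathfrak{L}}=cl_\tau(\mathfrak{L}\times\mathbb{R})$, then follows because every basic neighborhood of every point $[\varphi]$ contains points of $\mathfrak{L}\times\mathbb{R}$ (namely $(\lambda,\varphi(\lambda))$ for $\lambda$ in the relevant qualified set); property \ref{quattro} is immediate since the constant function $c$ has $[\text{const}_c]=c$ and any neighborhood of $c$ contains $(\lambda,c)$ eventually; property \ref{tre} holds essentially by construction, since the basic neighborhoods of $[\varphi]$ were designed to eventually contain $(\lambda,\varphi(\lambda))$; and property \ref{sei} is where freeness and the ultrafilter's maximality are both used — if $[\varphi]\neq[\psi]$ then $\{\lambda:\varphi(\lambda)\neq\psi(\lambda)\}\in\mathcal{U}$, and one separates $[\varphi]$ from $[\psi]$ by tubes of radius $|\varphi(\lambda)-\psi(\lambda)|/2$ on that qualified set.

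The main obstacle, and the step deserving real care, is \textbf{Hausdorffness together with uniqueness of $\Lambda$-limits}: I must show distinct classes $[\varphi]\neq[\psi]$ have disjoint neighborhoods. The naive separating tubes have radius shrinking to $0$ where $\varphi$ and $\psi$ get close, so one has to be sure they genuinely stay disjoint — this works because disjointness is checked ``eventually'' and on the qualified set where $\varphi\neq\psi$ the half-distance radii never overlap; but one must confirm that a point $[\chi]$ cannot lie in both $N(\varphi,Q,|\varphi-\psi|/2)$ and $N(\psi,Q,|\varphi-\psi|/2)$, which would force $\chi$ eventually within half-distance of \emph{both}, a contradiction by the triangle inequality on a qualified set (here the key filter property is that the intersection of two qualified sets is qualified and nonempty). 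A secondary subtlety is well-definedness: one must check the neighborhood filter at $[\varphi]$ does not depend on the representative $\varphi$, which again is a one-line consequence of eventual equality being filtered. Once Hausdorffness is in hand, properties \ref{uno}--\ref{sei} are all either built-in or short verifications, so I would present Hausdorffness as the crux and organize the write-up around it.
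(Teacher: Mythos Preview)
Your plan has a genuine gap in the construction of the underlying set. You take $\mathbb{R}_{\mathfrak{L}}$ to be the ultrapower $\mathbb{R}^{\mathfrak{L}}/{\sim}$ and then try to place $\mathfrak{L}\times\mathbb{R}$ inside it, but the proposed embedding does not work: the ``Dirac-type'' function that is $c$ at $\lambda_0$ and $0$ elsewhere is eventually $0$ (because $\mathcal{U}$ is free), so its class is $[0]$ regardless of $(\lambda_0,c)$. Thus every point of $\mathfrak{L}\times\mathbb{R}$ collapses to a single point of the quotient, and property~(\ref{uno}) cannot even be stated. The fallback ``define the limit by fiat'' does not help: the expression $\lim_{\lambda\to\Lambda}(\lambda,\varphi(\lambda))$ is a topological limit of the function $\lambda\mapsto(\lambda,\varphi(\lambda))$ taking values in the space, so the pairs $(\lambda,\varphi(\lambda))$ must already be honest points of $\mathbb{R}_{\mathfrak{L}}$ before any limit makes sense. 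Your two descriptions of the topology in the second paragraph are correspondingly inconsistent: the ``tubes'' $\{(\lambda,t):\lambda\in Q,\ t\in I_\lambda\}$ live in $\mathfrak{L}\times\mathbb{R}$, while the sets $N(\varphi,Q,\epsilon)=\{[\psi]:\dots\}$ live purely in the ultrapower; neither is a subset of the other, and neither addresses~(\ref{uno}).

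The paper's fix is to take the \emph{disjoint union} $\mathbb{R}_{\mathfrak{L}}=(\mathfrak{L}\times\mathbb{R})\uplus\mathbb{K}$, where $\mathbb{K}$ is the ultrapower, and to make the $\mathfrak{L}\times\mathbb{R}$ part discrete. A basic neighborhood of $[\varphi]\in\mathbb{K}$ is then simply
\[
N_{\varphi,Q}=\{(\lambda,\varphi(\lambda)):\lambda\in Q\}\cup\{[\varphi]\},\qquad Q\in\mathcal{U},
\]
the graph of $\varphi$ over $Q$ together with the limit point. No $\epsilon$-thickening is needed, because the discrete part carries no metric structure to respect. With this choice Hausdorffness is immediate and requires no triangle inequality: if $[\varphi]\neq[\psi]$, take $Q=\{\lambda:\varphi(\lambda)\neq\psi(\lambda)\}\in\mathcal{U}$, and then $N_{\varphi,Q}\cap N_{\psi,Q}=\varnothing$ since the two graphs over $Q$ are literally disjoint subsets of $\mathfrak{L}\times\mathbb{R}$. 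Properties~(\ref{uno})--(\ref{sei}) then fall out in a line each. Your instinct that the ultrapower is the set of limit points is correct; what is missing is that $\mathfrak{L}\times\mathbb{R}$ must sit alongside it as a separate discrete piece, not be squeezed inside it.
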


\begin{proof} We set%
\begin{equation*}
I=\left\{ \varphi \in \mathfrak{F}\left( \mathfrak{L},\mathbb{R}\right) \ |\
\varphi (x)=0\ \text{in a qualified set}\right\} .
\end{equation*}%
It is not difficult to prove that $I$ is a maximal ideal in $\mathfrak{F}%
\left( \mathfrak{L},\mathbb{R}\right) ;$ then%
\begin{equation*}
\mathbb{K}:=\frac{\mathfrak{F}\left( \mathfrak{L},\mathbb{R}\right) }{I}
\end{equation*}%
is a field. In the following, we shall identify a real number $c\in \mathbb{R%
}$ with the equivalence class of the constant function $\varphi:\mathfrak{L}\rightarrow\mathbb{R}$ such that $\varphi(\lambda)=c$ for every $\lambda\in \mathfrak{L}$.\\
We set 
\begin{equation*}
\mathbb{R}_{\mathfrak{L}}=\left( \mathfrak{L}\times \mathbb{R}\right) \cup 
\mathbb{K}\text{.}
\end{equation*}%
We equip $\mathbb{R}_{\mathfrak{L}}$ with the following topology $\tau $. A
basis for $\tau $ is given by 
\begin{equation*}
b(\tau )=\left\{ N_{\varphi ,Q}\ |\ \varphi \in \mathfrak{F}\left( \mathfrak{%
L},\mathbb{R}\right) ,Q\in \mathcal{U}\right\} \cup \mathcal{P}(\mathfrak{L}%
\times \mathbb{R})
\end{equation*}

where%
\begin{equation*}
N_{\varphi ,Q}:=\left\{ \left( \lambda ,\varphi (\lambda )\right) \mid
\lambda \in Q\right\} \cup \left\{ \left[ \varphi \right] _{I}\right\}
\end{equation*}%
is a neighborhood of $\left[ \varphi \right] _{I}$ for every $Q\in \mathcal{U%
}.$

In order to show that $b(\tau )$ is a basis for a topology, we have to show
that 
\begin{equation*}
\forall A,B\in b(\tau )\ \forall x\in A\cap B \ \exists C\in b(\tau )\text{ such that } x\in C\subset
A\cap B.
\end{equation*}%
Let $A,B\in b(\tau ).$ Let $x\in A\cap B$. If $x\notin \mathbb{K}$ then we can just set $C=A\cap B\cap \mathfrak{L}\times\mathbb{R}$, as the topology is discrete on $\mathfrak{L}\times\mathbb{R}$. If $x\in\mathbb{K}$ then 
there exist $R,S\in \mathcal{U}$ such that $A=N_{\varphi ,R}$ and $B=N_{\psi
,S}$ with $\left[ \varphi \right] _{I}=\left[ \psi \right] _{I}=x .$ Hence
there exists $Q\in \mathcal{U}$ such that 
\begin{equation*}
\forall \lambda \in Q,\ \varphi (\lambda )=\psi (\lambda ).
\end{equation*}%
Thus if we set $C:=N_{\varphi ,R\cap S\cap Q}$ we have that $x\in C\subset A\cap
B.$

Let us show that $\tau $ is a Hausdorff topology. Clearly it is sufficient
to check it for points in $\mathbb{K}$, so let $\xi \neq \zeta \in \mathbb{K}
$. Since $\xi \neq \zeta $, there exists $\varphi ,\psi \in \mathfrak{F}%
\left( \mathfrak{L},\mathbb{R}\right), Q\in \mathcal{U}$ such that%
\begin{equation*}
\xi =\left[ \varphi \right] _{I},\ \zeta =\left[ \psi \right] _{I}\ \ \text{%
and}\ \ \forall \lambda \in Q,\ \varphi (\lambda )\neq \psi (\lambda ).
\end{equation*}%
Therefore%
\begin{equation*}
N_{\varphi ,Q}\cap N_{\psi ,Q}=\varnothing .
\end{equation*}

Let us observe that, by construction, for every function\textit{\ }$\varphi :%
\mathfrak{L}\rightarrow \mathbb{R}$ we have that%
\begin{equation}
\lim_{\lambda \rightarrow \Lambda }\left( \lambda ,\varphi (\lambda )\right)
=\left[ \varphi \right] _{I}.  \label{elga}
\end{equation}%
In fact, given a neighborhood $N_{\varphi ,Q}$ of $\left[ \varphi \right]
_{I},$ we have that $\{\varphi (\lambda )\mid \lambda \in Q\}\subseteq
N_{\varphi ,Q},$ so $\left[ \varphi \right] _{I}$ is a $\Lambda $-limit of
the function $(\lambda ,\varphi (\lambda )).$ Since the space is Hausdorff,
the limit is unique, so $\lim_{\lambda \rightarrow \Lambda }\left( \lambda
,\varphi (\lambda )\right) =\left[ \varphi \right] _{I}.$

Let us prove that $\left( \mathbb{R}_{\mathfrak{L}},\tau \right) $ has the
desired properties:

\begin{itemize}
\item property (\ref{uno}) follows directly by the definition of $\tau ;$

\item property (\ref{quattro}) follows by the identification of every real
number $c\in \mathbb{R}$ with the equivalence class of the constant function 
$\left[ c\right] _{I};$

\item properties (\ref{tre}) and (\ref{sei}) follow by equation (\ref{elga}).\qedhere

\end{itemize}
\end{proof}

\begin{remark}
In \cite{BDNF06} and \cite{DNF}, nonstandard extensions are constructed by
means of similar, but different, topological considerations based on the
choice of the ultrafilter $\mathcal{U}$. However the authors showed (see
Theorem 4.5 in \cite{DNF}) that such extensions are Hausdorff if and only if
the ultrafilter $\mathcal{U}$ is Hausdorff (see again \cite{DNF}, Section 4
and 6), and in \cite{BS} Bartoszynski and Shelah proved that it is
consistent with ZFC that there are no Hausdorff ultrafilters. By contrast,
in our topological approach the extensions are always constructed inside
Hausdorff topological spaces under the much milder request of $\mathcal{U}$
being fine. This is possible because we incorporate the set of indices $%
\mathfrak{L}$ in the space.
\end{remark}

Motivated by the philosophical similarity between the properties expressed
in Thm \ref{nuovo} and the construction of $\mathbb{R}$ as the Cauchy
completion of $\mathbb{Q}$, we introduce the following definition:

\begin{definition}
\textit{A Hausdorff topological space }$\left( \mathbb{R}_{\mathfrak{L}%
},\tau \right) $ that satisfies conditions (\ref{uno})-(\ref{sei}) of Thm %
\ref{nuovo} will be called a $(\mathfrak{L},\mathcal{U})$-\textbf{completion
of} $\mathbb{R}$.
\end{definition}

\subsection{The hyperreal field\label{HF}}

Let $\left( \mathbb{R}_{\mathfrak{L}},\tau \right) $ be a $(\mathfrak{L},%
\mathcal{U})$-completion of $\mathbb{R}$. Let us fix some notation: we will
denote by $\mathbb{K}$ the set%
\begin{equation*}
\mathbb{K}=\left\{ \lim_{\lambda \rightarrow \Lambda }\left( \lambda
,\varphi (\lambda )\right) \mid \varphi \in \mathfrak{F}\left( \mathfrak{L},%
\mathbb{R}\right) \right\} .
\end{equation*}%
The aim of this section is to study the basic properties of $\mathbb{K}$.

\begin{proposition}
\label{carattere}$\left( \mathfrak{L}\times \mathbb{R}\right) \cap \mathbb{K}%
=\emptyset$.
\end{proposition}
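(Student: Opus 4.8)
The plan is to proceed by contradiction. Suppose $(\lambda_0,c)\in(\mathfrak{L}\times\mathbb{R})\cap\mathbb{K}$; by the definition of $\mathbb{K}$ there is a function $\varphi\in\mathfrak{F}(\mathfrak{L},\mathbb{R})$ with $\lim_{\lambda\rightarrow\Lambda}(\lambda,\varphi(\lambda))=(\lambda_0,c)$, and I want to show that the graph net $g:\mathfrak{L}\rightarrow\mathbb{R}_{\mathfrak{L}}$ defined by $g(\lambda):=(\lambda,\varphi(\lambda))$ is then forced to converge to a point that it never attains, which is impossible.

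First I would make a harmless reduction. Since $\mathcal{U}$ is free, $\{\lambda_0\}\notin\mathcal{U}$, so the function $\varphi'$ that agrees with $\varphi$ off $\lambda_0$ and satisfies $\varphi'(\lambda_0):=c+1$ is eventually equal to $\varphi$; hence, by property (\ref{sei}) of Theorem \ref{nuovo}, $\lim_{\lambda\rightarrow\Lambda}(\lambda,\varphi'(\lambda))=(\lambda_0,c)$ as well. Thus I may assume from the start that $\varphi(\lambda_0)\neq c$. Under this assumption $g(\lambda)\neq(\lambda_0,c)$ for every $\lambda\in\mathfrak{L}$: if $\lambda\neq\lambda_0$ the two points differ in the first coordinate, and if $\lambda=\lambda_0$ they differ in the second.

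The decisive step is to show that $\{(\lambda_0,c)\}$ is open in $(\mathbb{R}_{\mathfrak{L}},\tau)$; equivalently, every point of $\mathfrak{L}\times\mathbb{R}$ is isolated. Once this is available the proof closes immediately: $\{(\lambda_0,c)\}$ is then a neighborhood of $(\lambda_0,c)$, so from $\lim_{\lambda\rightarrow\Lambda}g(\lambda)=(\lambda_0,c)$ we obtain a qualified set $Q$ with $g(Q)\subseteq\{(\lambda_0,c)\}$; but $g$ never takes the value $(\lambda_0,c)$, so $g(Q)=\emptyset$, forcing $Q=\emptyset$, which contradicts $\emptyset\notin\mathcal{U}$. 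For the concrete completion built in the proof of Theorem \ref{nuovo} this isolatedness is immediate, because $\mathcal{P}(\mathfrak{L}\times\mathbb{R})$ is part of the basis $b(\tau)$; for an arbitrary $(\mathfrak{L},\mathcal{U})$-completion one has to deduce the discreteness of the subspace $\mathfrak{L}\times\mathbb{R}$ from conditions (\ref{uno})-(\ref{sei}) together with the Hausdorff property, and this is the point I expect to require the most care.

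A useful partial step, which avoids the discreteness question in one case, is to dispose first of the possibility that $\varphi$ is eventually constant. If $\varphi$ is eventually equal to some $d\in\mathbb{R}$, then $\varphi$ is eventually equal to the constant function with value $d$, so by (\ref{sei}) and (\ref{quattro}) we get $(\lambda_0,c)=\lim_{\lambda\rightarrow\Lambda}(\lambda,\varphi(\lambda))=\lim_{\lambda\rightarrow\Lambda}(\lambda,d)=d\in\mathbb{R}$; since a real number is not an ordered pair, $\mathbb{R}$ and $\mathfrak{L}\times\mathbb{R}$ are disjoint, contradicting $(\lambda_0,c)\in\mathfrak{L}\times\mathbb{R}$. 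The residual case, in which $\varphi$ is not eventually equal to any constant, still needs the isolatedness (equivalently, discreteness of $\mathfrak{L}\times\mathbb{R}$) argument above, and that is where the real content of the proposition lies.
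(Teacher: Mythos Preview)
Your approach matches the paper's: assume some $(\lambda_0,c)$ is a $\Lambda$-limit, use that the singleton $\{(\lambda_0,c)\}$ is open, and derive a contradiction with freeness of $\mathcal{U}$. The paper's execution is more direct---no modification of $\varphi$ and no eventually-constant case split are needed, since $g(Q)\subseteq\{(\lambda_0,c)\}$ already forces $Q\subseteq\{\lambda_0\}$---and the paper simply asserts the openness of the singleton (which is immediate for the explicit topology built in Theorem~\ref{nuovo}), so your caution about arbitrary $(\mathfrak{L},\mathcal{U})$-completions, while reasonable, goes beyond what the paper itself justifies.
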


\begin{proof} Let us suppose by contrast that there exists $\varphi :\mathfrak{L\rightarrow }X$ such
that 
\begin{equation*}
\lim_{\lambda \rightarrow \Lambda }\left( \lambda ,\varphi (\lambda )\right)
=(\lambda _{0},r)\in \mathfrak{L}\times \mathbb{R}\text{.}
\end{equation*}%
Since $\left\{ \left( \lambda _{0},r\right) \right\} $ is open, by
definition there exists $Q\in \mathcal{U}$ such that $\forall \lambda \in
Q,~\left( \lambda ,\varphi (\lambda )\right) =(\lambda _{0},r).$ Therefore $%
Q=\{\lambda _{0}\}$, and this is absurd since $\mathcal{U}$ is free. \end{proof}

From condition (\ref{uno}) in Thm \ref{nuovo} we know that $(\mathfrak{L}%
\times \mathbb{R})\uplus\mathbb{K}\subseteq \mathbb{R}_{\mathfrak{L}}.$ In
general, this inclusion might be proper; henceforth we introduce the
following definition:

\begin{definition}
We say that $(\mathbb{R}_{\mathfrak{L}},\tau)$ is a minimal $(\mathfrak{L},%
\mathcal{U})$-completion of $\mathbb{R}$ if $\mathbb{R}_{\mathfrak{L}}=(%
\mathfrak{L}\times \mathbb{R})\uplus\mathbb{K}.$
\end{definition}

It is immediate to see that any $(\mathfrak{L},\mathcal{U})$-completion of $%
\mathbb{R}$ contains a minimal $(\mathfrak{L},\mathcal{U})$-completion of $%
\mathbb{R}$, and that any minimal $(\mathfrak{L},\mathcal{U})$-completion of 
$\mathbb{R}$ does not properly contain another minimal $(\mathfrak{L},%
\mathcal{U})$-completion of $\mathbb{R}$ (and this is what motivates the
choice of the name "minimal" for such extensions).

From now on we will be only interested in minimal $(\mathfrak{L},\mathcal{U}%
) $-completions. \FRAME{ftbpFOX}{4.7305in}{1.919in}{0pt}{\Qct{\textit{%
Representation of the sets} $\mathfrak{L}\times \mathbb{R}$, $\mathbb{R}_{%
\mathfrak{L}}=cl_{\protect\tau }\left( \mathfrak{L}\times \mathbb{R}\right) $
and $\mathbb{K}=\mathbb{R}^{\ast }$}}{}{immagine04.png}{\special{language
"Scientific Word";type "GRAPHIC";maintain-aspect-ratio TRUE;display
"USEDEF";valid_file "F";width 4.7305in;height 1.919in;depth
0pt;original-width 4.984in;original-height 2.0044in;cropleft "0";croptop
"1";cropright "1";cropbottom "0";filename 'Immagine04.png';file-properties
"XNPEU";}}By condition (\ref{quattro}) in the definition of $(\mathfrak{L},%
\mathcal{U})$-completions it follows that $\mathbb{R}\subseteq \mathbb{K}$.
Moreover we have the following result:

\begin{proposition}
For every finite subset $F\subseteq \mathbb{R}$, for every function $\varphi
:\mathfrak{L}\rightarrow F$ we have that%
\begin{equation*}
\lim_{\lambda \rightarrow \Lambda }\left( \lambda ,\varphi (\lambda )\right)
\in F.
\end{equation*}
\end{proposition}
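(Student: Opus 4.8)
The plan is to reduce to the constant case by using the only combinatorial fact about ultrafilters that is really needed here: if $\mathfrak{L}$ is split into finitely many pieces, exactly one piece is qualified. First I would write $F=\{r_{1},\dots ,r_{n}\}$ with the $r_{i}$ distinct, and set $Q_{i}:=\varphi ^{-1}(r_{i})=\{\lambda \in \mathfrak{L}\mid \varphi (\lambda )=r_{i}\}$. The $Q_{i}$ are pairwise disjoint and $Q_{1}\cup \dots \cup Q_{n}=\mathfrak{L}\in \mathcal{U}$. A straightforward induction on $n$, using property 3 and property 4 in the definition of ultrafilter (if $A\notin \mathcal{U}$ then $A^{c}\in \mathcal{U}$, and $\mathcal{U}$ is closed under finite intersections, hence closed upwards and never contains two disjoint sets), shows that a finite union of pairwise disjoint sets lies in $\mathcal{U}$ if and only if exactly one of them does. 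Thus there is a unique index $i_{0}$ with $Q_{i_{0}}\in \mathcal{U}$.

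Next I would observe that on the qualified set $Q_{i_{0}}$ the function $\varphi$ coincides with the constant function $\lambda \mapsto r_{i_{0}}$; in other words $\varphi$ and this constant function are eventually equal. By property (\ref{sei}) of Theorem \ref{nuovo} this yields
\begin{equation*}
\lim_{\lambda \rightarrow \Lambda }\left( \lambda ,\varphi (\lambda )\right) =\lim_{\lambda \rightarrow \Lambda }\left( \lambda ,r_{i_{0}}\right) ,
\end{equation*}
and by property (\ref{quattro}) the right-hand side equals $r_{i_{0}}$, which belongs to $F$. This gives the claim.

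I do not expect a genuine obstacle: the statement is essentially a direct consequence of the characterizing properties of an $(\mathfrak{L},\mathcal{U})$-completion together with finite additivity of the ultrafilter. The only point deserving a line of care is the combinatorial claim that exactly one block of a finite partition of $\mathfrak{L}$ is qualified, and even that is immediate from the ultrafilter axioms. (As a side remark, the same argument localizes the limit inside any finite subset of a general Hausdorff space, since finiteness is what forces one of the preimages to be qualified; the only role of $\mathbb{R}$ here is that we have already fixed the completion $\mathbb{R}_{\mathfrak{L}}$.)
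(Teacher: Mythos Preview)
Your argument is correct. Both your proof and the paper's begin identically: partition $\mathfrak{L}$ into the fibers $Q_{i}=\varphi^{-1}(r_{i})$ and use the ultrafilter axioms to single out the unique qualified fiber $Q_{i_{0}}$. The difference is only in the concluding step. You invoke properties (\ref{sei}) and (\ref{quattro}) of Theorem~\ref{nuovo} directly: since $\varphi$ is eventually equal to the constant function $\lambda\mapsto r_{i_{0}}$, their $\Lambda$-limits coincide, and the limit of the constant is $r_{i_{0}}$. The paper instead argues by contradiction from the Hausdorff property: assuming the limit $\xi\neq x_{i_{0}}$, it separates $\xi$ and $x_{i_{0}}$ by disjoint open sets $O_{1},O_{2}$ and obtains a contradiction from the fact that $(\lambda,\varphi(\lambda))=(\lambda,x_{i_{0}})$ on the qualified set $A_{i_{0}}\cap B\cap C$. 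Your route is a bit more economical, since property (\ref{sei}) already packages the Hausdorff separation argument; the paper is in effect reproving a special case of (\ref{sei}) in situ. On the other hand, the paper's version makes the argument self-contained at the level of the topology, without appealing back to the characterizing theorem. Either way the proof is short and there is no gap.
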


\begin{proof} Let $F=\{x_{1},...,x_{n}\}.$ For every $i\leq n$ let 
\begin{equation*}
A_{i}=\{\lambda \in \mathfrak{L}\mid \varphi (\lambda )=x_{i}\}.
\end{equation*}%
Since $\mathcal{U}$ is an ultrafilter, there exists exactly one index $%
i_{0}\leq n$ such that $A_{i_{0}}\in \mathcal{U}$. Now let $\xi
=\lim_{\lambda \rightarrow \Lambda }\left( \lambda ,\varphi (\lambda
)\right) .$ Let us suppose that $\xi \neq x_{i_{0}}$. Let $O_{1},O_{2}$ be
disjoint open sets such that $\xi \in O_{1},x_{i_{0}}\in O_{2}$. Since $%
x_{i_{0}}$ is the limit of the constant function with value $x_{i_{0}}$,
there exists $B\in \mathcal{U}$ such that 
\begin{equation*}
\{(\lambda ,x_{i_{0}})\mid \lambda \in B\}\subseteq O_{2.}
\end{equation*}
Let $C\in \mathcal{U}$ be such that $\{(\lambda ,\varphi (\lambda ))\mid
\lambda \in C\}\subseteq O_{1}.$ Then by construction we have that 
\begin{equation*}
\forall \lambda \in A_{i_{0}}\cap B\cap C\text{ }(\lambda ,\varphi (\lambda
))=(\lambda ,x_{i_{0}})\in O_{1}\cap O_{2\text{,}}
\end{equation*}%
and this is a contradition since $O_{1}\cap O_{2}=\emptyset .$ Therefore $%
\lim_{\lambda \rightarrow \Lambda }\left( \lambda ,\varphi (\lambda )\right)
=x_{i_{0}}\in F$. \end{proof}

There is a natural way to define sums and products of elements of $\mathbb{K}
$:

\begin{definition}
We set 
\begin{eqnarray*}
\lim_{\lambda \rightarrow \Lambda }\left( \lambda ,\varphi (\lambda )\right)
+\lim_{\lambda \rightarrow \Lambda }\left( \lambda ,\psi (\lambda )\right)
&:&=\lim_{\lambda \rightarrow \Lambda }\left( \lambda ,\varphi (\lambda
)+\psi (\lambda )\right) ; \\
\lim_{\lambda \rightarrow \Lambda }\left( \lambda ,\varphi (\lambda )\right)
\cdot \lim_{\lambda \rightarrow \Lambda }\left( \lambda ,\psi (\lambda
)\right) &:&=\lim_{\lambda \rightarrow \Lambda }\left( \lambda ,\varphi
(\lambda )\cdot \psi (\lambda )\right) .
\end{eqnarray*}
\end{definition}

\begin{theorem}
$(\mathbb{K},+,\cdot ,0,1)$ is a field which contains $\mathbb{R}$.
\end{theorem}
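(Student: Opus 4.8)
The plan is to check that the sum and product of the preceding Definition are well defined on $\mathbb{K}$ and then to verify the field axioms by transporting everything from the pointwise operations on $\mathfrak{F}(\mathfrak{L},\mathbb{R})$. First I would observe that, by property (\ref{sei}) of Theorem \ref{nuovo}, the assignment $[\varphi]_{I}\mapsto \lim_{\lambda\rightarrow\Lambda}(\lambda,\varphi(\lambda))$ is a well-defined bijection between $\mathfrak{F}(\mathfrak{L},\mathbb{R})/I$ and $\mathbb{K}$: it is well defined and injective by the two implications of (\ref{sei}), and surjective by the definition of $\mathbb{K}$. Consequently, to see that $+$ and $\cdot$ on $\mathbb{K}$ do not depend on the chosen representatives it is enough to note that if $\varphi,\varphi'$ are eventually equal and $\psi,\psi'$ are eventually equal, then $\varphi+\psi$ and $\varphi'+\psi'$ (respectively $\varphi\cdot\psi$ and $\varphi'\cdot\psi'$) coincide on the intersection of two qualified sets, hence are eventually equal, and one invokes (\ref{sei}) once more. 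This simultaneously shows that $+$ and $\cdot$ on $\mathbb{K}$ correspond, under the above bijection, to the operations of the quotient ring $\mathfrak{F}(\mathfrak{L},\mathbb{R})/I$.

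With this identification in hand, commutativity, associativity and distributivity follow at once from the corresponding identities in $\mathbb{R}$, applied pointwise. The neutral elements are the $\Lambda$-limits of the constant functions $0$ and $1$, which by property (\ref{quattro}) are the reals $0$ and $1$, and they are distinct because the constant functions $0$ and $1$ are never eventually equal; the opposite of $\lim_{\lambda\rightarrow\Lambda}(\lambda,\varphi(\lambda))$ is $\lim_{\lambda\rightarrow\Lambda}(\lambda,-\varphi(\lambda))$. The only point that requires a genuine argument — and the one I expect to be the crux — is the existence of multiplicative inverses. Given $\xi=\lim_{\lambda\rightarrow\Lambda}(\lambda,\varphi(\lambda))\neq 0$, property (\ref{sei}) says that $\varphi$ is not eventually equal to the constant function $0$, i.e. $Z:=\{\lambda\in\mathfrak{L}\mid \varphi(\lambda)=0\}$ is not qualified; since $\mathcal{U}$ is an ultrafilter, $Z^{c}$ is qualified. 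Defining $\psi:\mathfrak{L}\rightarrow\mathbb{R}$ by $\psi(\lambda)=\varphi(\lambda)^{-1}$ for $\lambda\in Z^{c}$ and $\psi(\lambda)=0$ for $\lambda\in Z$, we get $\varphi(\lambda)\psi(\lambda)=1$ on the qualified set $Z^{c}$, so $\varphi\cdot\psi$ is eventually equal to the constant function $1$, and hence $\xi\cdot\lim_{\lambda\rightarrow\Lambda}(\lambda,\psi(\lambda))=1$ by (\ref{sei}). Therefore $(\mathbb{K},+,\cdot,0,1)$ is a field.

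It remains to check that $\mathbb{R}$ is a subfield of $\mathbb{K}$. The map $c\mapsto\lim_{\lambda\rightarrow\Lambda}(\lambda,c)$ sends each $c\in\mathbb{R}$ to $c$ itself by property (\ref{quattro}), and it is a ring homomorphism because the constant function with value $c+c'$ (respectively $c\cdot c'$) is the pointwise sum (respectively product) of the constant functions with values $c$ and $c'$; it is injective since two distinct constant functions are never eventually equal, qualified sets being nonempty. Hence $\mathbb{R}$ embeds in $\mathbb{K}$ as a subfield, which completes the proof. I do not anticipate any real obstacle here; the argument for multiplicative inverses is the only step where the ultrafilter hypothesis is used in an essential way, and it is exactly the classical ultrapower computation.
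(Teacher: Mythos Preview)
Your proof is correct and follows essentially the same approach as the paper: both reduce the field axioms to pointwise computations in $\mathfrak{F}(\mathfrak{L},\mathbb{R})$ via property~(\ref{sei}), and both identify the existence of multiplicative inverses as the only nontrivial step, handling it by finding a qualified set on which $\varphi$ is nonzero and inverting pointwise there. Your write-up is in fact more thorough---you explicitly check well-definedness of the operations and the subfield embedding, whereas the paper leaves these implicit---and you invoke property~(\ref{sei}) directly rather than phrasing the argument in terms of the Hausdorff property, but these are cosmetic differences.
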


\begin{proof} That $\mathbb{R\subseteq K}$ follows by condition (\ref%
{quattro}) of the definition of $(\mathfrak{L},\mathcal{U})$-completion. The
only non trivial property that we have to prove to show that $\mathbb{K}$ is
a field is the existence of a multiplicative inverse for every $x\neq 0$.
Let $x\in \mathbb{K}$, $x\neq 0.$ Since the topology is Hausdorff and $x\neq
0,$ there is a set $Q\in \mathcal{U}$ such that 
\begin{equation*}
\forall \lambda \in Q,\text{ }\varphi (\lambda )\neq 0.
\end{equation*}

Let $\phi :\mathfrak{L}\rightarrow \mathbb{R}$ be defined as follows: 
\begin{equation*}
\phi (\lambda )=\left\{ 
\begin{array}{cc}
1 & \text{if }\lambda \notin Q; \\ 
\frac{1}{\varphi (\lambda )} & \text{if }\lambda \in Q.%
\end{array}%
\right.
\end{equation*}%
Then $\varphi (\lambda )\cdot \phi (\lambda )=1$ for every $\lambda \in Q,$
thus $\lim_{\lambda \rightarrow \Lambda }\left( \lambda ,\varphi (\lambda
)\right) \cdot \lim_{\lambda \rightarrow \Lambda }\left( \lambda ,\phi
(\lambda )\right) =\lim_{\lambda \rightarrow \Lambda }\left( \lambda
,\varphi (\lambda )\cdot \phi (\lambda )\right) =1,$ namely%
\begin{equation*}
x^{-1}:=\lim_{\lambda \rightarrow \Lambda }\left( \lambda ,\phi (\lambda
)\right)
\end{equation*}%
is the inverse of $x.$ \end{proof}

The ordering of $\mathbb{R}$ can be extended to $\mathbb{K}$ by setting 
\begin{equation}
\lim_{\lambda \rightarrow \Lambda }\ \left( \lambda ,\varphi (\lambda
)\right) <\lim_{\lambda \rightarrow \Lambda }\ \left( \lambda ,\psi (\lambda
)\right) \Leftrightarrow \varphi (\lambda )<\psi (\lambda )\ \text{%
eventually,}  \label{pucci}
\end{equation}%
namely iff $\{(\lambda,\varphi(\lambda)-\psi(\lambda))\mid
\varphi(\lambda)-\psi(\lambda)\geq 0\}\cup [\varphi-\psi]$ is open (i.e. iff 
$\{\lambda \in \mathfrak{L\mid }\varphi (\lambda )<\psi (\lambda )\}$ is
qualified). This ordering is clearly an extension of the ordering relation
defined on $\mathbb{R}$ since, for every $x,y\in \mathbb{R}$, if $x\leq y$
and $\varphi _{x},\varphi _{y}:\mathfrak{L}\rightarrow \mathbb{R}$ are the
constant sequences with values resp. $x,y$ then 
\begin{equation*}
\{\lambda \in \mathfrak{L\mid }\varphi _{x}(\lambda )<\varphi _{y}(\lambda
)\}=\mathfrak{L},
\end{equation*}%
which is qualified.

\begin{remark}
Usually, the inclusion $\mathbb{R\subseteq K}$ is proper: e.g., let $%
\mathcal{U}$ be a countably incomplete ultrafilter\footnote{%
An ultrafilter $\mathcal{U}$ is countably incomplete if there exists a
family $\langle A_{n}\mid n\in \mathbb{N}\rangle $ of elements of $\mathcal{U%
}$ such that $\bigcap_{n\in \mathbb{N}}A_{n}=\emptyset .$}. Let $\langle
A_{n}\mid n\in \mathbb{N}\rangle $ be a family of elements of $\mathcal{U}$
such that $\bigcap_{n\in \mathbb{N}}A_{n}=\emptyset ,$ let $%
B_{n}=\bigcap_{i\leq n}A_{n}$ for all $n\in \mathbb{N}$ and let $\phi :%
\mathfrak{L}\rightarrow \mathbb{R}$ be defined as follows: for every $%
\lambda \in \mathfrak{L}$, 
\begin{equation*}
\phi (\lambda )=n\Leftrightarrow \lambda \in B_{n}\setminus B_{n+1.}
\end{equation*}%
Then $\lim_{\lambda \rightarrow \Lambda }\ \left( \lambda ,\phi (\lambda
)\right) \notin \mathbb{R}$: in fact, $\lim_{\lambda \rightarrow \Lambda }\
\left( \lambda ,\phi (\lambda )\right) >n$ for every $n\in \mathbb{N}$ (and
so, in particular, this limit is infinite). This holds since, for every $%
n\in \mathbb{N}$, by construction we have that%
\begin{equation*}
\{\lambda \in \mathfrak{L}\mid \phi (\lambda )\geq n\}=B_{n}\in \mathcal{U}.
\end{equation*}
\end{remark}

When the inclusion $\mathbb{R\subseteq K}$ is proper we have that $\mathbb{K}
$ is a superreal non Archimedean field\footnote{%
A superreal non Archimedean field is an ordered field that properly contains 
$\mathbb{R}$.}. In this case, it will be called a \textbf{hyperreal field}.
The terminology will be motivated by Cor \ref{beppino}, where we precise the
relationship (as fields) between the hyperreal field $\mathbb{K}$ and the
ultrapower $\mathbb{R}_{\mathcal{U}}^{\mathfrak{L}}$. Let us recall the
definition of $\mathbb{R}_{\mathcal{U}}^{\mathfrak{L}}:$

\begin{definition}
Let $\equiv _{\mathcal{U}}$ be the equivalence relation on $\mathbb{R}^{%
\mathfrak{L}}$ defined as follows: for every $\varphi ,\psi :\mathfrak{L}%
\rightarrow \mathbb{R}$%
\begin{equation*}
\varphi \equiv _{\mathcal{U}}\psi \Leftrightarrow \{\lambda \in \mathfrak{L}%
\mid \varphi (\lambda )=\psi (\lambda )\}\in \mathcal{U}.
\end{equation*}%
The equivalence class of every function $\varphi:\mathfrak{L}\rightarrow%
\mathbb{R}$ will be denoted by $[\varphi]_{\mathcal{U}}$. The ultrapower $%
\mathbb{R}_{\mathcal{U}}^{\mathfrak{L}}$ is the quotient $\mathbb{R}^{%
\mathfrak{L}}/\equiv _{\mathcal{U}}$.

The operations on $\mathbb{R}_{\mathcal{U}}^{\mathfrak{L}}$ are defined
componentwise: for every $\varphi ,\psi :\mathfrak{L}\rightarrow \mathbb{R}$
we set

\begin{equation*}
[\varphi]_{\mathcal{U}}+[\psi]_{\mathcal{U}}:=[\varphi+\psi]_{\mathcal{U}};
\ [\varphi]_{\mathcal{U}}+[\psi]_{\mathcal{U}}:=[\varphi\cdot\psi]_{\mathcal{%
U}}.
\end{equation*}
\end{definition}

A well-known result (see e.g. \cite{keisler}) is that $(\mathbb{R}^{%
\mathfrak{L}}_{\mathcal{U}},[0]_{\mathcal{U}},[1]_{\mathcal{U}},+,\cdot)$ is
a field. Moreover, we have the following:

\begin{corollary}
\label{beppino}$\mathbb{K}$ and$\mathbb{\ R}_{\mathcal{U}}^{\mathfrak{L}}$
are isomorphic as fields.
\end{corollary}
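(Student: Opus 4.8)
The plan is to exhibit an explicit bijection and check it is a ring homomorphism. Define $\Phi:\mathbb{R}^{\mathfrak{L}}_{\mathcal{U}}\to\mathbb{K}$ by sending $[\varphi]_{\mathcal{U}}$ to $\lim_{\lambda\to\Lambda}(\lambda,\varphi(\lambda))$; equivalently, since the $\Lambda$-limit of $(\lambda,\varphi(\lambda))$ was computed to be $[\varphi]_I$ in the proof of Theorem~\ref{nuovo}, this is just the identification of the quotient $\mathfrak{F}(\mathfrak{L},\mathbb{R})/I$ with $\mathbb{R}^{\mathfrak{L}}/\!\equiv_{\mathcal{U}}$. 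So the whole corollary reduces to the observation that the ideal $I$ and the equivalence relation $\equiv_{\mathcal{U}}$ describe the same quotient.

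First I would check that $\Phi$ is well-defined and injective in one stroke: for $\varphi,\psi:\mathfrak{L}\to\mathbb{R}$ we have $[\varphi]_{\mathcal{U}}=[\psi]_{\mathcal{U}}$ iff $\{\lambda\mid\varphi(\lambda)=\psi(\lambda)\}\in\mathcal{U}$ iff $\varphi$ and $\psi$ are eventually equal iff (by property~(\ref{sei}) of Theorem~\ref{nuovo}) $\lim_{\lambda\to\Lambda}(\lambda,\varphi(\lambda))=\lim_{\lambda\to\Lambda}(\lambda,\psi(\lambda))$. Surjectivity is immediate from the very definition of $\mathbb{K}$ as the set of all such $\Lambda$-limits (and property~(\ref{tre}) guarantees each limit exists). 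That $\Phi$ respects $+$ and $\cdot$ is exactly the content of the definitions of the operations on $\mathbb{K}$ (sums and products of limits are limits of the pointwise sums and products), matched against the componentwise operations on the ultrapower; and $\Phi$ sends $[0]_{\mathcal{U}}$ to $0$ and $[1]_{\mathcal{U}}$ to $1$ by the identification of constants. Hence $\Phi$ is a field isomorphism.

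There is essentially no hard part here — the statement is a bookkeeping consequence of Theorem~\ref{nuovo} together with the way the algebraic structure on $\mathbb{K}$ was set up. The one point that deserves a word is that the operations on $\mathbb{K}$ are well-defined independently of the chosen representing functions; but this is already implicit in the earlier development (the field structure theorem on $\mathbb{K}$), so I would simply invoke it rather than reprove it. If one wanted to be fully self-contained, the only genuine check is that $I=\{\varphi\mid\varphi(\lambda)=0$ eventually$\}$ is precisely the kernel of the quotient map realizing $\equiv_{\mathcal{U}}$, which is the tautology $\varphi\equiv_{\mathcal{U}}\psi\iff\varphi-\psi\in I$.

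In short: the proof is one displayed chain of equivalences establishing that $\Phi$ is a well-defined bijection, plus one line each for additivity, multiplicativity, and preservation of $0$ and $1$. I would present it in three or four sentences and not belabor it.
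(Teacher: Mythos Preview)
Your proof is correct and is essentially the same as the paper's: the paper defines the isomorphism in the opposite direction, $\Psi:\mathbb{K}\to\mathbb{R}_{\mathcal{U}}^{\mathfrak{L}}$ sending $\lim_{\lambda\to\Lambda}(\lambda,\varphi(\lambda))$ to $[\varphi]_{\mathcal{U}}$, and likewise invokes property~(\ref{sei}) for injectivity, the definition of $\mathbb{K}$ for surjectivity, and declares preservation of operations immediate. Your version is slightly more detailed (you spell out the homomorphism checks and the link to the ideal $I$), but the argument is the same.
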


\begin{proof} The isomorphism is given by the map $\Psi :\mathbb{%
K\rightarrow R}_{\mathcal{U}}^{\mathfrak{L}}$ such that, for every $\varphi :%
\mathfrak{L\rightarrow }\mathbb{R},$ 
\begin{equation*}
\Psi \left( \lim_{\lambda \rightarrow \Lambda }\left( \lambda ,\varphi
(\lambda )\right) \right) =[\varphi ]_{\mathcal{U}}.
\end{equation*}

Condition (\ref{sei}) in the definition of $(\mathfrak{L},\mathcal{U})$%
-completion entails that $\Psi $ is injective, whereas the definition of $%
\mathbb{K}$ as the set of all possible $\Lambda $-limits entails that $\Psi $
is surjective. Since it is immediate to see that $\Psi $ also preserves the
operations, we have that it is an isomorphism. \end{proof}

We will strenghten Cor \ref{beppino} in Thm \ref{keisler}. By Cor \ref%
{beppino} it clearly follows that, if the $(\mathfrak{L},\mathcal{U})$%
-completion is minimal, as sets $\mathbb{R}_{\mathfrak{L}}\cong \left( 
\mathfrak{L}\times \mathbb{R}\right) \uplus \mathbb{R}_{\mathcal{U}}^{%
\mathfrak{L}}.$

\begin{remark}
\label{ignazio}Let us note that $(\left( \mathfrak{L}\times \mathbb{R}%
\right) \uplus \mathbb{R}_{\mathcal{U}}^{\mathfrak{L}},\tau )$ is a $(%
\mathfrak{L},\mathcal{U})$-completion of $\mathbb{R}$ for different choices
of $\tau $. One such choice is the topology $\tau _{\mathcal{U}}$ introduced
in the proof of Theorem \ref{nuovo}; a different topology can be constructed
as follows: let us fix a function $\varphi $ with $\lim_{\lambda \rightarrow
\Lambda }\left( \lambda ,\varphi (\lambda )\right) \notin \mathbb{R}$, a
nonempty infinite set $B\notin \mathcal{U}$, a free filter $\mathcal{F}$ on $%
B$ and let us consider the following topology $\widetilde{\tau }$on $\left( 
\mathfrak{L}\times \mathbb{R}\right) \uplus \mathbb{R}_{\mathcal{U}}^{%
\mathfrak{L}}$: if $\xi \neq \lim_{\lambda \rightarrow \Lambda }\left(
\lambda ,\varphi (\lambda )\right) $ then a family of open neighborhoods of $%
\xi $ is 
\begin{equation*}
\left\{ O_{\psi ,Q}\mid Q\in \mathcal{U}\text{, }\psi \text{ function with }%
\xi =\lim_{\lambda \rightarrow \Lambda }\left( \lambda ,\psi (\lambda
)\right) \right\} ;
\end{equation*}%
if $\xi =\lim_{\lambda \rightarrow \Lambda }\left( \lambda ,\varphi (\lambda
)\right) $ then a family of open neighborhoods of $\xi $ is 
\begin{equation*}
\{O_{F,Q}\mid F\in \mathcal{F},Q\in \mathcal{U\}}
\end{equation*}%
where, for every $F\in \mathcal{F}$, $Q\in \mathcal{U}$ we set%
\begin{equation*}
O_{F,Q}=O_{\varphi ,Q}\cup \{(\lambda ,x)\mid \lambda \in F,x\in \mathbb{R}%
\}.
\end{equation*}%
By construction, $\left( \left( \mathfrak{L}\times \mathbb{R}\right) \uplus 
\mathbb{R}_{\mathcal{U}}^{\mathfrak{L}},\widetilde{\tau }\right) \ $is a $%
\left( \mathfrak{L},\mathcal{U}\right) $-completion of $\mathbb{R}$.
\end{remark}

A consequence of Remark \ref{ignazio} is that there are infinitely many
topologies $\tau $ that make $(\left( \mathfrak{L}\times \mathbb{R}\right)
\uplus \mathbb{R}_{\mathcal{U}}^{\mathfrak{L}},\tau )$ a $(\mathfrak{L},%
\mathcal{U})$-completion of $\mathbb{R}$. However, the topology introduced
in the proof of Thm \ref{nuovo} plays a central role in our approach. For
this reason, we introduce the following definition.

\begin{definition}
Let $\left( \mathbb{R}_{\mathfrak{L}},\tau \right) $ be a $(\mathfrak{L},%
\mathcal{U})$-completion of $\mathbb{R}$. We call \textbf{slim topology},
and we denote by $\tau _{\mathcal{U}},$ the topology on $\mathbb{R}_{%
\mathfrak{L}}$ generated by the family of open sets%
\begin{equation*}
\left\{ N_{\varphi ,Q}\ |\ \varphi \in \mathfrak{F}\left( \mathfrak{L},%
\mathbb{R}\right) ,Q\in \mathcal{U}\right\} \cup \mathcal{P}(\mathfrak{L}%
\times \mathbb{R})
\end{equation*}%
where, for every $\varphi \in \mathfrak{F}\left( \mathfrak{L},\mathbb{R}%
\right) ,~Q\in \mathcal{U}$ we set%
\begin{equation*}
N_{\varphi ,Q}:=\left\{ \left( \lambda ,\varphi (\lambda )\right) \mid
\lambda \in Q\right\} \cup \left\{ \lim_{\lambda \rightarrow \Lambda }\left(
\lambda ,\varphi (\lambda )\right) \right\} .
\end{equation*}
\end{definition}

\begin{proposition}
\label{gollum}The slim topology $\tau _{\mathcal{U}}$ is finer than any
topology $\tau $ that makes $\left( \left( \mathfrak{L}\times \mathbb{R}%
\right) \uplus \mathbb{R}_{\mathcal{U}}^{\mathfrak{L}},\tau \right) $ a $%
\left( \mathfrak{L},\mathcal{U}\right) $-completion of $\mathbb{R}$.
\end{proposition}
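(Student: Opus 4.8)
The plan is to fix an arbitrary topology $\tau$ making $\left(\left(\mathfrak{L}\times\mathbb{R}\right)\uplus\mathbb{R}_{\mathcal{U}}^{\mathfrak{L}},\tau\right)$ a $\left(\mathfrak{L},\mathcal{U}\right)$-completion of $\mathbb{R}$ and to prove the inclusion $\tau\subseteq\tau_{\mathcal{U}}$, i.e.\ that every $\tau$-open set is $\tau_{\mathcal{U}}$-open; since $\tau_{\mathcal{U}}$ is itself such a completion (it is the topology built in the proof of Thm \ref{nuovo}), this is exactly the assertion that $\tau_{\mathcal{U}}$ is the finest topology in the class under consideration. As a preliminary bookkeeping remark, note that by conditions (\ref{quattro}) and (\ref{sei}) in the definition of $\left(\mathfrak{L},\mathcal{U}\right)$-completion, for every $\varphi\in\mathfrak{F}\left(\mathfrak{L},\mathbb{R}\right)$ the $\Lambda$-limit $\lim_{\lambda\rightarrow\Lambda}\left(\lambda,\varphi(\lambda)\right)$ computed in $\tau$ is the element of $\mathbb{R}_{\mathcal{U}}^{\mathfrak{L}}$ corresponding to $[\varphi]_{\mathcal{U}}$, exactly as in $\tau_{\mathcal{U}}$; hence the basic sets $N_{\varphi,Q}$ generating $\tau_{\mathcal{U}}$ do not depend on which admissible topology one starts from, and each $N_{\varphi,Q}$ contains the point $\lim_{\lambda\rightarrow\Lambda}\left(\lambda,\varphi(\lambda)\right)$.

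Next I would take $A\in\tau$ and $x\in A$ and exhibit a basic $\tau_{\mathcal{U}}$-open set $C$ with $x\in C\subseteq A$. If $x\in\mathfrak{L}\times\mathbb{R}$, then $\left\{x\right\}\in\mathcal{P}\left(\mathfrak{L}\times\mathbb{R}\right)\subseteq\tau_{\mathcal{U}}$ and $C=\left\{x\right\}$ does the job. If instead $x\in\mathbb{K}=\mathbb{R}_{\mathcal{U}}^{\mathfrak{L}}$, choose $\varphi\in\mathfrak{F}\left(\mathfrak{L},\mathbb{R}\right)$ with $x=\lim_{\lambda\rightarrow\Lambda}\left(\lambda,\varphi(\lambda)\right)$ (limit in $\tau$). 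Since $A$ is a $\tau$-open neighborhood of $x$, Definition \ref{filter} supplies a qualified set $Q\in\mathcal{U}$ with $\left\{\left(\lambda,\varphi(\lambda)\right)\mid\lambda\in Q\right\}\subseteq A$; combined with $x\in A$ this gives
\begin{equation*}
N_{\varphi,Q}=\left\{\left(\lambda,\varphi(\lambda)\right)\mid\lambda\in Q\right\}\cup\left\{x\right\}\subseteq A,
\end{equation*}
and $N_{\varphi,Q}$ is a basic $\tau_{\mathcal{U}}$-open set by the preliminary remark, so $C=N_{\varphi,Q}$ works. In either case every point of $A$ has a $\tau_{\mathcal{U}}$-neighborhood contained in $A$, hence $A\in\tau_{\mathcal{U}}$, and $\tau\subseteq\tau_{\mathcal{U}}$.

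The argument has no genuinely hard step: its whole content is that the definition of $\Lambda$-limit forces any neighborhood (in any admissible topology) of a limit point $\lim_{\lambda\rightarrow\Lambda}\left(\lambda,\varphi(\lambda)\right)$ to contain a tail $\left\{\left(\lambda,\varphi(\lambda)\right)\mid\lambda\in Q\right\}$ over a qualified set, and such tails --- with the limit point adjoined --- are precisely the generators of $\tau_{\mathcal{U}}$ besides the discrete part on $\mathfrak{L}\times\mathbb{R}$. The only point I expect to require a moment's care is the preliminary bookkeeping step identifying the $\Lambda$-limit maps of $\tau$ and $\tau_{\mathcal{U}}$, so that the set $N_{\varphi,Q}$ used above genuinely contains $x$; this is immediate from conditions (\ref{quattro}) and (\ref{sei}) (equivalently, from Cor \ref{beppino}).
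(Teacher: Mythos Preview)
Your proof is correct and follows essentially the same route as the paper: take an open set in $\tau$, split on whether a given point lies in $\mathfrak{L}\times\mathbb{R}$ or in $\mathbb{K}$, and in the latter case use the definition of $\Lambda$-limit to extract a qualified tail contained in the open set, yielding a basic $\tau_{\mathcal{U}}$-neighborhood. If anything, your preliminary remark explaining why the $\Lambda$-limits in $\tau$ and $\tau_{\mathcal{U}}$ coincide (so that $N_{\varphi,Q}$ really contains $x$) makes explicit a point the paper leaves to ``by construction''.
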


\begin{proof} Let $\tau $ be given, let $O$ be an open set in $\tau $ and
let $x\in O$. If $x\in \mathfrak{L}\times \mathbb{R}$ then $\{x\}$ is an
open neighborhood of $x$ in $\tau _{\mathcal{U}}$ contained in $O$; if $%
x=\lim_{\lambda \rightarrow \Lambda }\left( \lambda ,\varphi (\lambda
)\right) $ for some function $\varphi :\mathfrak{L\rightarrow }\mathbb{R}$
then let $B\in \mathcal{U}$ be such that $\{(\lambda ,\varphi (\lambda
))\mid \lambda \in B\}\subseteq O;$ therefore, by construction, $O_{\varphi
,B}$ is an open neighborhood of $x$ in $\tau _{\mathcal{U}}$ entirely
contained in $O$. This proves that $O$ is an open set in $\tau _{\mathcal{U}}
$, therefore $\tau _{\mathcal{U}}$ is finer than $\tau .$ \end{proof}

The slim topology can also be characterized in terms of closure of subsets
of $\left( \mathfrak{L}\times \mathbb{R}\right) :$

\begin{proposition}
Let $\left( \left( \mathfrak{L}\times \mathbb{R}\right) \uplus \mathbb{R}_{%
\mathcal{U}}^{\mathfrak{L}},\tau \right) $ be a $\left( \mathfrak{L},%
\mathcal{U}\right) $-completion of $\mathbb{R}$. The following facts are
equivalent:

\begin{enumerate}
\item $\tau =\tau _{\mathcal{U}};$

\item for every set $B\subseteq \left( \mathfrak{L}\times \mathbb{R}\right) $
we have that 
\begin{equation*}
cl_{\tau }(B)=B\cup \left\{ \lim_{\lambda \rightarrow \Lambda }\left(
\lambda ,\varphi (\lambda )\right) \mid \exists A\in \mathcal{U~}\forall
\lambda \in A~(\lambda ,\varphi (\lambda ))\in B\right\} .
\end{equation*}
\end{enumerate}
\end{proposition}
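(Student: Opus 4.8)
The strategy is to prove the two implications separately, using Proposition~\ref{gollum} for one direction and a direct comparison of bases for the other. Throughout, write $\mathbb{R}_{\mathfrak{L}}=(\mathfrak{L}\times\mathbb{R})\uplus\mathbb{R}_{\mathcal{U}}^{\mathfrak{L}}$, and recall that in the slim topology every point of $\mathfrak{L}\times\mathbb{R}$ is isolated while a basic neighborhood of a limit point $\xi=\lim_{\lambda\to\Lambda}(\lambda,\varphi(\lambda))$ has the form $N_{\varphi,Q}=\{(\lambda,\varphi(\lambda))\mid\lambda\in Q\}\cup\{\xi\}$ for $Q\in\mathcal{U}$.

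For the implication $(1)\Rightarrow(2)$, fix $B\subseteq\mathfrak{L}\times\mathbb{R}$ and compute $cl_{\tau_{\mathcal{U}}}(B)$ directly. Every point of $\mathfrak{L}\times\mathbb{R}$ that lies in $B$ is in the closure; no other point of $\mathfrak{L}\times\mathbb{R}$ is, since such points are isolated. For a limit point $\xi=\lim_{\lambda\to\Lambda}(\lambda,\varphi(\lambda))$: if there is $A\in\mathcal{U}$ with $(\lambda,\varphi(\lambda))\in B$ for all $\lambda\in A$, then every basic neighborhood $N_{\varphi,Q}$ meets $B$ (indeed contains $\{(\lambda,\varphi(\lambda))\mid\lambda\in Q\cap A\}\subseteq B$, and $Q\cap A\in\mathcal{U}$ is nonempty), so $\xi\in cl_{\tau_{\mathcal{U}}}(B)$. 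Conversely, suppose $\xi\in cl_{\tau_{\mathcal{U}}}(B)$ but no such $A$ exists; then for the representing function $\varphi$ the set $A_{\varphi}=\{\lambda\mid(\lambda,\varphi(\lambda))\in B\}$ is not in $\mathcal{U}$, so its complement $Q=\mathfrak{L}\setminus A_{\varphi}$ is qualified, and then $N_{\varphi,Q}$ is a neighborhood of $\xi$ disjoint from $B$, contradicting $\xi\in cl_{\tau_{\mathcal{U}}}(B)$. This gives exactly the claimed formula. (One should note that if $\xi$ is representable at all by a function whose graph is eventually in $B$, the formula holds regardless of which representative is chosen, since any two representatives agree on a qualified set.)

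For $(2)\Rightarrow(1)$, assume the closure formula holds for $\tau$. By Proposition~\ref{gollum}, $\tau_{\mathcal{U}}$ is finer than $\tau$, so it suffices to show every $\tau_{\mathcal{U}}$-open set is $\tau$-open, equivalently that every $\tau_{\mathcal{U}}$-closed set is $\tau$-closed — or, most conveniently, that the two closure operators agree on all subsets of $\mathbb{R}_{\mathfrak{L}}$, and then invoke the fact that a topology is determined by its closure operator. Since singletons $\{(\lambda_{0},r)\}$ are $\tau_{\mathcal{U}}$-open, it is enough to see they are $\tau$-open; this follows because $cl_{\tau}(\mathfrak{L}\times\mathbb{R}\setminus\{(\lambda_{0},r)\})$, by the formula in $(2)$, adds only limit points and hence omits $(\lambda_{0},r)$, so its complement $\{(\lambda_{0},r)\}$ is $\tau$-open. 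For a basic slim neighborhood $N_{\varphi,Q}$ of a limit point $\xi$: its complement in $\mathbb{R}_{\mathfrak{L}}$ consists of all points of $\mathfrak{L}\times\mathbb{R}$ off the graph-piece $\{(\lambda,\varphi(\lambda))\mid\lambda\in Q\}$ together with all limit points other than $\xi$; applying the formula in $(2)$ to the trace of this complement on $\mathfrak{L}\times\mathbb{R}$, one checks that $\xi$ is not added back (since $\varphi$'s graph is eventually \emph{outside} that trace, as $Q$ is qualified) while every other limit point is, so the complement of $N_{\varphi,Q}$ is $\tau$-closed and $N_{\varphi,Q}$ is $\tau$-open. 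Hence $\tau_{\mathcal{U}}\subseteq\tau$, and combined with Proposition~\ref{gollum} we get $\tau=\tau_{\mathcal{U}}$.

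The main obstacle is bookkeeping with representatives in the $(2)\Rightarrow(1)$ direction: a limit point $\xi$ may be represented by many functions $\psi$, and one must verify that the closure formula's condition "$\exists A\in\mathcal{U}\,\forall\lambda\in A\,(\lambda,\psi(\lambda))\in B$" behaves coherently across representatives. This is handled by the observation that if $\psi\equiv_{\mathcal{U}}\varphi$ then $\{\lambda\mid\psi(\lambda)=\varphi(\lambda)\}\in\mathcal{U}$, so the graph of $\psi$ is eventually in $B$ iff the graph of $\varphi$ is — making the condition depend only on $\xi$ and $B$, not on the chosen representative. With that remark in place, the neighborhood/complement arguments above go through cleanly.
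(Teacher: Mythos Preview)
Your proof is correct and follows essentially the same approach as the paper's. For $(1)\Rightarrow(2)$ both you and the paper compute the closure directly from the slim basis, splitting on whether $\{\lambda\mid(\lambda,\varphi(\lambda))\in B\}$ is qualified; for $(2)\Rightarrow(1)$ both take $B$ to be the $(\mathfrak{L}\times\mathbb{R})$-trace of the complement of a basic set $N_{\varphi,Q}$, apply the closure formula to get $cl_{\tau}(B)=\mathbb{R}_{\mathfrak{L}}\setminus N_{\varphi,Q}$, conclude $N_{\varphi,Q}\in\tau$, and finish with Proposition~\ref{gollum}. Your write-up is in fact more careful than the paper's in two respects: you explicitly verify that singletons $\{(\lambda_{0},r)\}$ are $\tau$-open (the paper only treats the sets $N_{\varphi,Q}$ and leaves this implicit), and you address the representative-independence of the condition ``$\exists A\in\mathcal{U}\ \forall\lambda\in A\ (\lambda,\varphi(\lambda))\in B$'', which the paper does not comment on.
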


\begin{proof} $(1)\Rightarrow (2)$ Let $\varphi:\mathfrak{L}\rightarrow\mathbb{R}$, let $B\subseteq (\mathfrak{L}\times\mathbb{R})$ and let $\xi=\lim\limits_{\lambda\rightarrow\Lambda}(\lambda,\varphi(\lambda))$. Let $%
A=\{\lambda \in \mathfrak{L}\mid (\lambda ,\varphi (\lambda ))\in B\}.$ If $%
A\in \mathcal{U}$ then for every open neighborhood $O$ of $\xi$ we have that $O\cap B\neq \emptyset $ by construction, so $\xi
\in cl_{\tau _{\mathcal{U}}}(B);$ if $A\notin \mathcal{U}$ then $O_{\varphi
,A}$ is a neighborhood of $\xi $ such that $O_{\varphi ,A}\cap B=\emptyset ,$
therefore $\xi \notin cl_{\tau _{\mathcal{U}}}(B).$

$(2)\Rightarrow (1)$ Let $A\in \mathcal{U}$, let $\varphi :\mathfrak{L}%
\rightarrow \mathbb{R}$ and let $\xi =\lim_{\lambda \rightarrow \Lambda
}\left( \lambda ,\varphi (\lambda )\right) .$ Let us consider $B=\left( 
\mathfrak{L}\times \mathbb{R}\right) \setminus O_{A,\varphi }.$ By
hypothesis and construction 
\begin{equation*}
cl_{\tau }(B)=\left[ \left( \mathfrak{L}\times \mathbb{R}\right) \uplus 
\mathbb{R}_{\mathcal{U}}^{\mathfrak{L}}\right] \setminus O_{A,\varphi }.
\end{equation*}%
Therefore $O_{A,\varphi }$ is open for every $A\in \mathcal{U},~\varphi :%
\mathfrak{L}\rightarrow \mathbb{R}$, so $\tau $ is finer than $\tau _{%
\mathcal{U}}$ which, as a consequence of Proposition \ref{gollum}, entails
that $\tau =\tau _{\mathcal{U}}.$ \end{proof}

\begin{definition}
We will call $\left( \left( \mathfrak{L}\times \mathbb{R}\right) \uplus 
\mathbb{R}_{\mathcal{U}}^{\mathfrak{L}},\tau _{\mathcal{U}}\right) $ the 
\textbf{canonical} $\left( \mathfrak{L},\mathcal{U}\right) $-completion of $%
\mathbb{R}$.
\end{definition}

From the next section on we will work only with the minimal canonical $%
\left( \mathfrak{L},\mathcal{U}\right) $-completion of $\mathbb{R}$.

\subsection{Natural extension of sets and functions}

From now on, $\overline{\left( \cdot \right) }$ will denote the closure
operator in the canonical $\left( \mathfrak{L},\mathcal{U}\right) $%
-completion of $\mathbb{R}.$

\begin{definition}
For every $E\subseteq \mathbb{R}$ we set%
\begin{equation*}
E_{\mathfrak{L}}:=\overline{\mathfrak{L}\times E}.
\end{equation*}
\end{definition}

A different and related (as we will show in Prop \ref{sandokan}) extension
of $E$ is the following:

\begin{definition}
\label{janez}Given \textit{a set }$E\subset \mathbb{R}$, we set%
\begin{equation*}
E^{\ast }:=\left\{ \lim_{\lambda \rightarrow \Lambda }\left( \lambda ,\psi
(\lambda )\right) |\psi (\lambda )\in E\right\} ;
\end{equation*}%
$E^{\ast }$ is called the \textbf{natural extension }of $E.$
\end{definition}

Let us observe that by property (\ref{quattro}) of the definition of $\left( 
\mathfrak{L},\mathcal{U}\right) $-completions it follows that $E\subseteq
E^{\ast }.$ Following the notation introduced in Def \ref{janez}, from now
on we will denote $\mathbb{K}$ by $\mathbb{R}^{\ast }.$

It is easy to modify the proof of Prop \ref{carattere} to obtain the
following result:

\begin{proposition}
\label{sandokan}For every $E\subseteq \mathbb{R}$ we have that $E_{\mathfrak{%
L}}=\left( \mathfrak{L}\times E\right) \uplus E^{\ast }$.
\end{proposition}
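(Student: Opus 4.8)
The plan is to adapt the proof of Proposition \ref{carattere} essentially verbatim, since $E_{\mathfrak{L}}=\overline{\mathfrak{L}\times E}$ and we must identify which points of $\mathbb{R}_{\mathfrak{L}}=(\mathfrak{L}\times\mathbb{R})\uplus\mathbb{R}^{\ast}$ lie in this closure. We argue the two inclusions. For $\supseteq$: the inclusion $\mathfrak{L}\times E\subseteq E_{\mathfrak{L}}$ is trivial, and if $\xi=\lim_{\lambda\rightarrow\Lambda}(\lambda,\psi(\lambda))\in E^{\ast}$ with $\psi(\lambda)\in E$ for all $\lambda$, then every basic neighborhood $N_{\psi,Q}$ of $\xi$ meets $\mathfrak{L}\times E$ (indeed $\{(\lambda,\psi(\lambda))\mid\lambda\in Q\}\subseteq\mathfrak{L}\times E$ and is nonempty since $Q\neq\emptyset$), so $\xi\in\overline{\mathfrak{L}\times E}$. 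One small point to check here is that the definition of $E^{\ast}$ only requires $\psi(\lambda)\in E$ eventually, but using property (\ref{sei}) we may replace $\psi$ by a function taking values in $E$ everywhere without changing the limit, so this is harmless.

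For $\subseteq$: first note that since the slim topology is discrete on $\mathfrak{L}\times\mathbb{R}$, a point $(\lambda_0,r)\in\mathfrak{L}\times\mathbb{R}$ lies in $\overline{\mathfrak{L}\times E}$ if and only if $(\lambda_0,r)\in\mathfrak{L}\times E$, i.e. iff $r\in E$; this gives the $\mathfrak{L}\times E$ part. Now suppose $\xi\in\mathbb{R}^{\ast}\cap\overline{\mathfrak{L}\times E}$, say $\xi=\lim_{\lambda\rightarrow\Lambda}(\lambda,\psi(\lambda))$ for some $\psi:\mathfrak{L}\rightarrow\mathbb{R}$. We must produce a representative of $\xi$ whose values lie in $E$. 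Consider the basic neighborhood $N_{\psi,\mathfrak{L}}$ of $\xi$; since $\xi\in\overline{\mathfrak{L}\times E}$, it must meet $\mathfrak{L}\times E$, but more usefully, apply the characterization: for any $Q\in\mathcal{U}$, $N_{\psi,Q}\cap(\mathfrak{L}\times E)\neq\emptyset$. This says that for every $Q\in\mathcal{U}$ there is $\lambda\in Q$ with $\psi(\lambda)\in E$, i.e. the set $A=\{\lambda\in\mathfrak{L}\mid\psi(\lambda)\in E\}$ meets every qualified set. Since $\mathcal{U}$ is an ultrafilter, a set meeting every member of $\mathcal{U}$ must itself belong to $\mathcal{U}$ (otherwise its complement would be qualified and disjoint from it). Hence $A\in\mathcal{U}$, so $\psi(\lambda)\in E$ eventually, and therefore $\xi=\lim_{\lambda\rightarrow\Lambda}(\lambda,\psi(\lambda))\in E^{\ast}$ by definition.

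Finally, the union is disjoint: $(\mathfrak{L}\times E)\cap E^{\ast}\subseteq(\mathfrak{L}\times\mathbb{R})\cap\mathbb{R}^{\ast}=\emptyset$ by Proposition \ref{carattere}, so the decomposition $E_{\mathfrak{L}}=(\mathfrak{L}\times E)\uplus E^{\ast}$ is genuine. I do not expect any serious obstacle: the only slightly delicate point is the interplay between the "eventually in $E$" formulation in Definition \ref{janez} and the "some $\lambda$ in every qualified set has $\psi(\lambda)\in E$" that falls directly out of the closure condition, and this is resolved cleanly by the ultrafilter property that a set hitting every qualified set is itself qualified. Everything else is a direct transcription of the neighborhood-basis argument already used for Proposition \ref{carattere}.
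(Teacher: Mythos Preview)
Your proof is correct and follows precisely the approach the paper indicates, namely adapting Proposition~\ref{carattere}: the paper does not spell out a proof but simply remarks that it is an easy modification of that proposition, and your argument is exactly such a modification. One very minor point: Definition~\ref{janez} actually reads as $\psi(\lambda)\in E$ for all $\lambda$ rather than eventually, so your aside about this in the $\supseteq$ direction is unnecessary there---but the observation is needed (and you use it correctly) in the $\subseteq$ direction, where you pass from ``$\psi(\lambda)\in E$ on a qualified set'' to membership in $E^{\ast}$ via property~(\ref{sei}).
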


It is also possible to extend functions to $\mathbb{R}_{\mathfrak{L}}.$ To
this aim, given a function

\begin{equation*}
f:A\rightarrow B
\end{equation*}%
we will denote by%
\begin{equation*}
f_{\mathfrak{L}}:\mathfrak{L}\times A\rightarrow \mathfrak{L}\times B
\end{equation*}%
the function defined as follows:%
\begin{equation*}
f_{\mathfrak{L}}\left( \lambda ,x\right) =\left( \lambda ,f(x)\right) .
\end{equation*}

\begin{lemma}
\label{agnese}For every $A,B\subseteq \mathbb{R}$, for every function $%
f:A\rightarrow B,$ $f$ can be extended to a continuous function%
\begin{equation*}
\overline{f_{\mathfrak{L}}}:A_{\mathfrak{L}}\rightarrow B_{\mathfrak{L}}.
\end{equation*}%
Moreover, the restriction of $\overline{f_{\mathfrak{L}}}$ to $A$ coincides
with $f.$
\end{lemma}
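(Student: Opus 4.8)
The plan is to define $\overline{f_{\mathfrak{L}}}$ explicitly on all of $A_{\mathfrak{L}} = (\mathfrak{L}\times A)\uplus A^{\ast}$ (using Proposition \ref{sandokan}) and then verify continuity using the slim topology $\tau_{\mathcal{U}}$. On the discrete part $\mathfrak{L}\times A$ we simply set $\overline{f_{\mathfrak{L}}}(\lambda,x)=(\lambda,f(x))=f_{\mathfrak{L}}(\lambda,x)$. On the part $A^{\ast}$, given $\xi=\lim_{\lambda\to\Lambda}(\lambda,\psi(\lambda))$ with $\psi(\lambda)\in A$ eventually, we set
\begin{equation*}
\overline{f_{\mathfrak{L}}}(\xi):=\lim_{\lambda\to\Lambda}\left(\lambda,f(\psi(\lambda))\right),
\end{equation*}
where we may harmlessly modify $\psi$ off a qualified set so that $\psi(\lambda)\in A$ for all $\lambda$. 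First I would check this is well-defined: if $\lim_{\lambda\to\Lambda}(\lambda,\psi(\lambda))=\lim_{\lambda\to\Lambda}(\lambda,\psi'(\lambda))$ then by condition (\ref{sei}) of Theorem \ref{nuovo} the functions $\psi,\psi'$ are eventually equal, hence $f\circ\psi$ and $f\circ\psi'$ are eventually equal, hence (again by (\ref{sei})) their $\Lambda$-limits agree. Note $\overline{f_{\mathfrak{L}}}(\xi)\in B^{\ast}$ since $f(\psi(\lambda))\in B$, so the map indeed lands in $B_{\mathfrak{L}}$. That the restriction to $A$ coincides with $f$ is immediate from property (\ref{quattro}): a real $c\in A$ is $\lim_{\lambda\to\Lambda}(\lambda,c)$, and $\overline{f_{\mathfrak{L}}}(c)=\lim_{\lambda\to\Lambda}(\lambda,f(c))=f(c)$.

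Next I would prove continuity. At any point $(\lambda_0,x)\in\mathfrak{L}\times A$ there is nothing to do, since $\{(\lambda_0,x)\}$ is open in the slim topology (it lies in $\mathcal{P}(\mathfrak{L}\times\mathbb{R})$). So fix $\xi=\lim_{\lambda\to\Lambda}(\lambda,\psi(\lambda))\in A^{\ast}$, put $\eta=\overline{f_{\mathfrak{L}}}(\xi)=\lim_{\lambda\to\Lambda}(\lambda,f(\psi(\lambda)))\in B^{\ast}$, and take a basic slim-open neighborhood of $\eta$ in $B_{\mathfrak{L}}$. Such a neighborhood contains one of the form $N_{f\circ\psi,\,Q}\cap B_{\mathfrak{L}}=\{(\lambda,f(\psi(\lambda)))\mid\lambda\in Q\}\cup\{\eta\}$ for some $Q\in\mathcal{U}$ — here I use that every basic $\tau_{\mathcal{U}}$-neighborhood of $\eta$ shrinks to an $N_{\chi,Q}$ with $[\chi]_I=\eta$, and by (\ref{sei}) such $\chi$ agrees eventually with $f\circ\psi$, so after intersecting qualified sets I may take $\chi=f\circ\psi$. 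Then $N_{\psi,Q}=\{(\lambda,\psi(\lambda))\mid\lambda\in Q\}\cup\{\xi\}$ is a slim-open neighborhood of $\xi$ in $A_{\mathfrak{L}}$, and by the very definition of $f_{\mathfrak{L}}$ and of $\overline{f_{\mathfrak{L}}}$ on $A^{\ast}$ we get $\overline{f_{\mathfrak{L}}}(N_{\psi,Q})\subseteq N_{f\circ\psi,Q}$: the point $(\lambda,\psi(\lambda))$ maps to $(\lambda,f(\psi(\lambda)))$, and $\xi$ maps to $\eta$. This proves continuity at $\xi$.

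Finally, one should observe that $\overline{f_{\mathfrak{L}}}$ genuinely extends $f_{\mathfrak{L}}$ (trivial, by the definition on the discrete part) and that the notation $\overline{f_{\mathfrak{L}}}$ is justified, i.e. $\overline{f_{\mathfrak{L}}}$ is \emph{the} continuous extension of $f_{\mathfrak{L}}$ to the closure $\overline{\mathfrak{L}\times A}=A_{\mathfrak{L}}$ — this follows because $\mathfrak{L}\times A$ is dense in $A_{\mathfrak{L}}$ and $B_{\mathfrak{L}}$ is Hausdorff, so a continuous map out of $A_{\mathfrak{L}}$ is determined by its restriction to $\mathfrak{L}\times A$. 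The main obstacle, and the only place that requires genuine care rather than unwinding definitions, is the well-definedness step together with the reduction of an arbitrary basic neighborhood of $\eta$ to the canonical form $N_{f\circ\psi,Q}$; both hinge on condition (\ref{sei}) of Theorem \ref{nuovo} (eventual equality $\Leftrightarrow$ equal $\Lambda$-limits), so I would make sure to invoke it cleanly at each point. Everything else — landing in $B_{\mathfrak{L}}$, agreement with $f$ on $A$, continuity at the discrete points — is immediate from the construction of the slim topology.
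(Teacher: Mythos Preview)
Your proposal is correct and follows essentially the same approach as the paper: define $\overline{f_{\mathfrak{L}}}$ on $\mathfrak{L}\times A$ by $f_{\mathfrak{L}}$ and on $A^{\ast}$ by $\overline{f_{\mathfrak{L}}}\bigl(\lim_{\lambda\to\Lambda}(\lambda,\psi(\lambda))\bigr)=\lim_{\lambda\to\Lambda}(\lambda,f(\psi(\lambda)))$, then verify continuity in the slim topology and agreement with $f$ on $A$. The only cosmetic difference is that the paper checks continuity by showing preimages of basic open sets are open, whereas you verify continuity pointwise by pulling back a basic neighborhood $N_{f\circ\psi,Q}$ of $\eta$ to $N_{\psi,Q}$; your extra remarks on well-definedness via condition~(\ref{sei}) and on uniqueness of the continuous extension are welcome elaborations the paper leaves implicit.
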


\begin{proof} The extension of $f$ to $\mathfrak{L}\times A$ is given by $f_{\mathfrak{L}}$. Therefore to get the desired extension to $A_{\mathfrak{L}}$ it is sufficient to extend $f_{\mathfrak{L}}$ on 
$A^{\ast }.$ For every $\varphi \in A^{\mathfrak{L}}$ we set 
\begin{equation*}
\overline{f_{\mathfrak{L}}}\left( \lim_{\lambda \rightarrow \Lambda }\left(
\lambda ,\varphi (\lambda )\right) \right) =\lim_{\lambda \rightarrow
\Lambda }\left( \lambda ,f\left( \varphi (\lambda )\right) \right) .
\end{equation*}%
Let us note that the definition is well posed and that $\overline{f_{%
\mathfrak{L}}}\left( \lim_{\lambda \rightarrow \Lambda }\left( \lambda
,\varphi (\lambda )\right) \right) \in B^{\ast }$ since, for every $\varphi
\in A^{\mathfrak{L}}$, the function $f\circ \varphi \in B^{\mathfrak{L}}$.
This extension is continuous: let $\Omega $ be a basis open subset of $B_{%
\mathfrak{L}}$. If $\Omega =\{(\lambda ,x)\}$ then 
\begin{equation*}
\overline{f_{\mathfrak{L}}}^{-1}(\Omega )=\bigcup_{y\in f^{-1}(x)}(\lambda
,y),
\end{equation*}%
which is open. If $\Omega =N_{\varphi ,Q}$ for some $\varphi :\mathfrak{L}%
\rightarrow \mathbb{R}$, $Q\in \mathcal{U}$ then let $\xi \in \overline{f_{%
\mathfrak{L}}}^{-1}(\Omega ).$ If $\xi =(\lambda ,x)$ for some $x\in A$ then 
$\{(\lambda ,x)\}$ is a neighborhood of $(\lambda ,x)$ included in $%
\overline{f_{\mathfrak{L}}}^{-1}(\Omega );$ if $\xi =\lim_{\lambda \rightarrow \Lambda }(\lambda,\psi(\lambda))$ then $%
\overline{f_{\mathfrak{L}}}(\xi)=\lim_{\lambda \rightarrow \Lambda }(\lambda,\varphi(\lambda)),$ therefore there exists $%
Q_{1}\in \mathcal{U}$ such that $f(\psi (\lambda ))=\varphi (\lambda )$ for
all $\lambda \in Q_{1}$, hence if we set $Q_{2}=Q\cap Q_{1}$ we have that $%
N_{\psi ,Q_{2}}$ is a neighborhood of $\xi $ included in $\overline{f_{%
\mathfrak{L}}}^{-1}(\Omega ),$ thus $\overline{f_{\mathfrak{L}}}^{-1}(\Omega
)$ is open, and this proves that $\overline{f_{\mathfrak{L}}}$ is continuous.

Finally, $\overline{f_{\mathfrak{L}}}$ restricted to $A$ coincides with $f$
since, for every $a\in A,$ by definition%
\begin{equation*}
\overline{f_{\mathfrak{L}}}(a)=\overline{f_{\mathfrak{L}}}\left(
\lim_{\lambda \rightarrow \Lambda }\left( \lambda ,a\right) \right)
=\lim_{\lambda \rightarrow \Lambda }\left( \lambda ,f\left( a\right) \right)
=f(a).\qedhere
\end{equation*}

\end{proof}

Lemma \ref{agnese} entails that the following definition is well posed:

\begin{definition}
Given a function%
\begin{equation*}
f:A\rightarrow B
\end{equation*}%
the restriction of $\overline{f_{\mathfrak{L}}}$ to $A^{\ast }$ is called
the \textbf{natural extension} of $f$ and it will be denoted by%
\begin{equation*}
f^{\ast }:A^{\ast }\rightarrow B^{\ast }.
\end{equation*}
\end{definition}

In particular, $f^{\ast }(a)=f(a)$ for every $a\in A.$

\subsection{The $\Lambda $-limit\ in\ $V_{\infty }(\mathbb{R)}$\label{marina}%
}

In this section we want to extend the notion of $\Lambda $-limit to a wider
family of functions. To do that, we have to introduce the notion of
superstructure on a set (see also \cite{keisler}):

\begin{definition}
Let $E$ be an infinite set. The superstructure on $E$ is the set 
\begin{equation*}
V_{\infty }(E)=\bigcup_{n\in \mathbb{N}}V_{n}(E),
\end{equation*}%
where the sets $V_{n}(E)$ are defined by induction by setting%
\begin{equation*}
V_{0}(E)=E
\end{equation*}%
and, for every $n\in \mathbb{N}$, 
\begin{equation*}
V_{n+1}(E)=V_{n}(E)\cup \mathcal{P}\left( V_{n}(E)\right) .
\end{equation*}
\end{definition}

Here $\mathcal{P}\left( E\right) $ denotes the power set of $E.$ Identifying
the couples with the Kuratowski pairs and the functions and the relations
with their graphs, it follows that{\ }$V_{\infty }(E\mathbb{)}$ contains
almost every usual mathematical object that can be constructed starting with 
$E;$ in particular, $V_{\infty }(\mathbb{R)}$ contains almost every usual
mathematical object of analysis.

Sometimes, following e.g. \cite{keisler}, we will refer to%
\begin{equation*}
\mathbb{U}:=V_{\infty }(\mathbb{R)}
\end{equation*}%
as to the \textbf{standard universe}. A mathematical entity (number, set,
function or relation) is said to be \textbf{standard} if it belongs to $%
\mathbb{U}$.

Now we want to formally define the $\Lambda $-limit of $(\lambda ,\varphi
(\lambda ))$ where $\varphi (\lambda )$ is any bounded function of
mathematical objects in $V_{\infty }(\mathbb{R)}$ (a function $\varphi :%
\mathfrak{L}\rightarrow V_{\infty }(\mathbb{R)}$ is called bounded if there
exists $n$ such that $\forall \lambda \in \mathfrak{L},\ \varphi (\lambda
)\in V_{n}(\mathbb{R)}$). To this aim, let us consider a function%
\begin{equation}
\varphi :\mathfrak{L}\rightarrow V_{n}(\mathbb{R)}.  \label{net}
\end{equation}%
We will define $\lim_{\lambda \rightarrow \Lambda }(\lambda ,\varphi
(\lambda ))$ by induction on $n$.

\begin{definition}
\label{def}For $n=0,$ $\lim_{\lambda \rightarrow \Lambda }(\lambda ,\varphi
(\lambda ))$ exists by Thm \ref{nuovo}; so by induction we may assume that
the limit is defined for $n-1$ and we define it for the function (\ref{net})
as follows:%
\begin{equation*}
\lim_{\lambda \rightarrow \Lambda }(\lambda ,\varphi (\lambda ))=\left\{
\lim_{\lambda \rightarrow \Lambda }(\lambda ,\psi (\lambda ))\ |\ \psi :%
\mathfrak{L}\rightarrow V_{n-1}(\mathbb{R)}\text{ and}\ \forall \lambda \in 
\mathfrak{L},\ \psi (\lambda )\in \varphi (\lambda )\right\} .
\end{equation*}
\end{definition}

Clearly $\lim_{\lambda \rightarrow \Lambda }(\lambda ,\varphi (\lambda ))$
is a well defined set in $V_{\infty }(\mathbb{R}^{\ast }).$

\begin{definition}
A mathematical entity (number, set, function or relation) which is the $%
\Lambda $-limit of a function is called \textbf{internal}.
\end{definition}

Notice that $V_{\infty }(\mathbb{R}^{\ast })$ contains sets which are not
internal.

\begin{example}
Each real number is standard and internal. However the set of real numbers $%
\mathbb{R}\in V_{\infty }(\mathbb{R}^{\ast })$ is standard, but not
internal. In order to see this let us suppose that there is a function $%
\varphi :\mathfrak{L}\rightarrow V_{1}(\mathbb{R)}$ such that $\mathbb{R=}%
\lim_{\lambda \rightarrow \Lambda }(\lambda ,\varphi (\lambda )).$
Therefore, by definition, we would have 
\begin{equation*}
\mathbb{R=}\left\{ \lim_{\lambda \rightarrow \Lambda }(\lambda ,\psi
(\lambda ))\ |\ \psi :\mathfrak{L}\rightarrow \mathbb{R}\text{ and}\ \forall
\lambda \in \mathfrak{L},\ \psi (\lambda )\in \varphi (\lambda )\right\} .
\end{equation*}%
In particular, for every constant $c\in \mathbb{R}$ we have that $c\in
\varphi (\lambda )$; therefore, $\varphi (\lambda )=\mathbb{R}$ for every $%
\lambda \in \mathfrak{L}$, and this is absurd because 
\begin{equation*}
\lim_{\lambda \rightarrow \Lambda }\left( \lambda ,\mathbb{R}\right) =%
\mathbb{R}^{\ast },
\end{equation*}%
and (except trivial cases) $\mathbb{R}^{\ast }$ properly includes $\mathbb{R}
$. Let us explicitly observe that (except trivial cases), while for every $%
c\in \mathbb{R}$ the function $\lambda \rightarrow (\lambda ,c)$ converges
to $c$, given $A\in V_{n}(\mathbb{R)}$, for $n\geq 1$ the function $\lambda
\rightarrow (\lambda ,A)$ converges to a proper superset of $A$.
\end{example}

\begin{definition}
A mathematical entity (number, set, function or relation) which is not
internal is called \textbf{external.}
\end{definition}

As it is given, the definition of limit given by Def \ref{def} is not
related to any topology. Thus a question arises naturally: is there a
topological Hausdorff space such that the limit given by Def \ref{def} is
the topological limit of a function?

The answer is affirmative, and it is a consequence of the possibility to
topologize the set 
\begin{equation*}
\mathbb{U}_{\mathfrak{L}}=\left[ \mathfrak{L}\times V_{\infty }(\mathbb{R})%
\right] \uplus V_{\infty }(\mathbb{R}^{\ast }).
\end{equation*}%
To topologize $\mathbb{U}_{\mathfrak{L}}$ we take as open sets:

\begin{itemize}
\item every subset of $\mathfrak{L}\times V_{\infty }(\mathbb{R})$;

\item $\left\{ x\right\} $ for every $x\in V_{\infty }(\mathbb{R}^{\ast })$
that is external;

\item $N_{\varphi ,Q}:=\left\{ \left( \lambda ,\varphi (\lambda )\right)
|\lambda \in Q\right\} \cup \left\{ x\right\}$ for every $x$ internal such
that $\varphi $ is a bounded sequence with 
\begin{equation*}
x=\lim_{\lambda \rightarrow \Lambda }\left( \lambda ,\varphi (\lambda
)\right).
\end{equation*}
\end{itemize}

We let $\sigma _{\mathcal{U}}$ be the topology on $\mathbb{U}_{\mathfrak{L}}$
generated by these open sets.t is clear that this topology is Hausdorff and
that the $\Lambda $-limit is a limit in this topology.

The set%
\begin{equation*}
\mathbb{U}_{\mathfrak{L}}=\left[ \mathfrak{L}\times V_{\infty }(\mathbb{R})%
\right] \cup V_{\infty }(\mathbb{R}^{\ast })
\end{equation*}%
will be called the \textbf{expanded universe}. Let us note that, by
construction, $\mathbb{U}_{\mathfrak{L}}\subseteq V_{\infty }(\mathbb{R}_{%
\mathfrak{L}}).$

The results about extensions of subsets of $\mathbb{R}$ and of functions $%
f:A\rightarrow B$, $A,B\subseteq \mathbb{R}$, can be generalized to our new
general setting. Since a function $f$ can be identified with its graph then
the natural extension of a function is defined by the above definition.
Moreover we have the following result, that can be proved as Lemma \ref%
{agnese}:

\begin{theorem}
For every sets $E,F\in V_{\infty }(\mathbb{R})$ and for every function $%
f:E\rightarrow F$ the \textbf{natural extension} of $f$ is a continuous
function 
\begin{equation*}
f^{\ast }:E^{\ast }\rightarrow F^{\ast },
\end{equation*}%
and for every function $\varphi :\mathfrak{L}\rightarrow E$ we have that%
\begin{equation*}
\lim_{\lambda \rightarrow \Lambda }\ f(\lambda ,\varphi (\lambda ))=f^{\ast
}\left( \lim_{\lambda \rightarrow \Lambda }(\lambda ,\varphi (\lambda
))\right) .
\end{equation*}
\end{theorem}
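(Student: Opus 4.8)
The plan is to mirror the structure of Lemma \ref{agnese}, but now working inside the expanded universe $\mathbb{U}_{\mathfrak{L}}$ with the topology $\sigma_{\mathcal{U}}$ instead of inside $\mathbb{R}_{\mathfrak{L}}$. First I would recall that, since a function $f:E\rightarrow F$ is identified with its graph $\Gamma_f\subseteq V_n(\mathbb{R})$ for a suitable $n$, the natural extension $f^{\ast}$ has already been defined via Definition \ref{def} as the internal set $\lim_{\lambda\rightarrow\Lambda}(\lambda,\Gamma_f)$ (here $\Gamma_f$ is viewed as the constant function $\lambda\mapsto\Gamma_f$). So the first real step is to verify that this internal set is indeed the graph of a function from $E^{\ast}$ to $F^{\ast}$: one checks, using Definition \ref{def} unwound one level at a time together with the characterization of $\Lambda$-limits of functions into finite sets and the transfer-like behaviour already implicit in the construction, that for every $\xi\in E^{\ast}$ there is exactly one $\eta\in F^{\ast}$ with $(\xi,\eta)\in f^{\ast}$, and that $\eta=\lim_{\lambda\rightarrow\Lambda}(\lambda,f(\varphi(\lambda)))$ whenever $\xi=\lim_{\lambda\rightarrow\Lambda}(\lambda,\varphi(\lambda))$ with $\varphi\in E^{\mathfrak{L}}$. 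This last identity is exactly the compositional formula $\lim_{\lambda\rightarrow\Lambda}f(\lambda,\varphi(\lambda))=f^{\ast}(\lim_{\lambda\rightarrow\Lambda}(\lambda,\varphi(\lambda)))$ that the theorem asks for, so proving it does double duty.

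Next I would establish continuity of $f^{\ast}:E^{\ast}\rightarrow F^{\ast}$ with respect to the subspace topologies inherited from $\sigma_{\mathcal{U}}$ (note $E^{\ast}$ and $F^{\ast}$ are subsets of $V_{\infty}(\mathbb{R}^{\ast})$). Actually it is cleaner to extend $f^{\ast}$ first to the map $\overline{f_{\mathfrak{L}}}:E_{\mathfrak{L}}\rightarrow F_{\mathfrak{L}}$ exactly as in Lemma \ref{agnese}: on $\mathfrak{L}\times E$ put $\overline{f_{\mathfrak{L}}}(\lambda,x)=(\lambda,f(x))$, and on $E^{\ast}$ use $f^{\ast}$. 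Continuity is then checked on basic open sets of $F_{\mathfrak{L}}$ (equivalently, of $\sigma_{\mathcal{U}}$ restricted to $F_{\mathfrak{L}}$): preimages of singletons $\{(\lambda,y)\}$ are unions of singletons in $\mathfrak{L}\times E$, hence open; preimages of singletons $\{x\}$ with $x$ external are empty (since $f^{\ast}$ lands in $F^{\ast}$, which is internal, and an external point cannot be hit — or if $F^{\ast}$ happens to contain external points the preimage is again easily seen to be open/empty as appropriate); and for a basic neighborhood $N_{\psi,Q}$ of an internal point, given $\xi\in\overline{f_{\mathfrak{L}}}^{-1}(N_{\psi,Q})$ one produces, exactly as in Lemma \ref{agnese}, a qualified set $Q_1$ on which $f(\varphi(\lambda))=\psi(\lambda)$ (where $\xi=\lim(\lambda,\varphi(\lambda))$) and then $N_{\varphi,Q\cap Q_1}$ is a neighborhood of $\xi$ inside the preimage. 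Finally one restricts $\overline{f_{\mathfrak{L}}}$ to $E^{\ast}$ to get the continuous $f^{\ast}:E^{\ast}\rightarrow F^{\ast}$, and the agreement $f^{\ast}(a)=f(a)$ for $a\in E$ follows from $\overline{f_{\mathfrak{L}}}(a)=\lim_{\lambda\rightarrow\Lambda}(\lambda,f(a))=f(a)$ by property (\ref{quattro}).

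The main obstacle, I expect, is not the continuity argument — which is a routine transcription of Lemma \ref{agnese} — but rather the well-definedness and single-valuedness of $f^{\ast}$ as a function, i.e. showing that the internal set $\lim_{\lambda\rightarrow\Lambda}(\lambda,\Gamma_f)$ really is a functional graph on $E^{\ast}$. Unwinding Definition \ref{def}, an element of this limit is $\lim_{\lambda\rightarrow\Lambda}(\lambda,\chi(\lambda))$ for some $\chi:\mathfrak{L}\rightarrow V_{n-1}(\mathbb{R})$ with $\chi(\lambda)\in\Gamma_f$ eventually; writing $\chi(\lambda)$ as a Kuratowski pair $\langle\varphi(\lambda),\vartheta(\lambda)\rangle$ with $\vartheta(\lambda)=f(\varphi(\lambda))$ eventually, one must argue that the $\Lambda$-limit of a function of pairs is the pair of the $\Lambda$-limits (this needs the behaviour of the limit on the pairing operation, which in turn rests on Definition \ref{def} applied at the relevant levels and on the Hausdorff/uniqueness property) and that two functions $\varphi,\varphi'$ with the same $\Lambda$-limit $\xi$ force $f\circ\varphi$ and $f\circ\varphi'$ to have the same $\Lambda$-limit (this is condition (\ref{sei}): $\varphi=\varphi'$ eventually implies $f\circ\varphi=f\circ\varphi'$ eventually). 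Assembling these observations gives that $f^{\ast}$ is well defined and single-valued, after which the rest of the proof goes through as indicated.
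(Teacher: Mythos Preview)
Your proposal is correct and follows exactly the approach the paper intends: the paper gives no separate proof of this theorem but simply states that it ``can be proved as Lemma \ref{agnese}'', and your plan is precisely to transcribe that argument to the expanded universe $\mathbb{U}_{\mathfrak{L}}$ with the topology $\sigma_{\mathcal{U}}$. Your additional care about well-definedness of $f^{\ast}$ as a function (checking that $\lim_{\lambda\rightarrow\Lambda}(\lambda,\Gamma_f)$ is a functional graph on $E^{\ast}$, via condition (\ref{sei}) and the behaviour of $\Lambda$-limits on Kuratowski pairs) is a genuine detail the paper leaves implicit, but it is entirely in the spirit of the indicated approach.
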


\section{Comparison between $\Lambda $-theory and ultrapowers\label%
{comparison}}

\subsection{$\Lambda $-theory and nonstandard universes}

It should be evident to any reader with a background in NSA that $\Lambda$%
-theory (when restricted to minimal canonical extensions) is closely related
to ultrapowers (which, from a purely logical point of view, are even easier
to define). In this section we want to detail the relationship between $%
\Lambda $-theory and NSA. We will show that $\mathbb{U}_{\mathfrak{L}}$
contains a nostandard universe in the sense of Keisler \cite{keisler}. We
recall the main definitions of \cite{keisler}.

\begin{definition}
\label{SE}A \textbf{superstructure embedding} is a one to one mapping $\ast $
of $V_{\infty }(\mathbb{R)}$ into another superstructure $V_{\infty }(%
\mathbb{S)}$ such that

\begin{enumerate}
\item $\mathbb{R}$ is a proper subset of $\mathbb{S}$, $r^{\ast }=r$ for all 
$r\in \mathbb{R}$, and $\mathbb{R}^{\ast }=\mathbb{S}$;

\item for $x,y\in V_{\infty }(\mathbb{R)}$, $x\in y$ if and only if $x^{\ast
}\in y^{\ast }$.
\end{enumerate}
\end{definition}

To avoid confusion, in this section we will use the letter $\mathbb{K}$ to
denote the non-Archimedean field constructed in Section \ref{HF}, while $%
\mathbb{R}^{\ast }$ will be used as in Def \ref{SE}.

Let us denote by $\mathcal{L}$ a formal language relative to a first order
predicate logic with the equality symbol, a binary relation symbol $\in $,
and a constant symbol for each element in $V_{\infty }(\mathbb{R)}$. We
recall that a sentence $p\in \mathcal{L}$ is bounded if every quantifier in $%
p$ is bounded (see e.g. \cite{keisler}). The notion of bounded sequence
allows to define the notion of nonstandard universe.

\begin{definition}
\label{truzzo}A \textbf{nonstandard universe} is a superstructure embedding $%
\ast :V_{\infty }(\mathbb{R)}\rightarrow V_{\infty }(\mathbb{R}^{\mathbb{%
\ast }}\mathbb{)}$ which satisfies Leibniz' Principle, which is the property
that states that for each bounded sentence $p\in \mathcal{L},$ $p$ is true
in $V_{\infty }(\mathbb{R)}$ if and only if $p^{\ast }$ is true\footnote{$%
p^{\ast }$ is the bounded sentence obtained by changing every constant
symbol $c\in V_{\infty }(\mathbb{R})$ that appears in $p$ with $c^{\ast }.$}
in $V_{\infty }(\mathbb{R}^{\mathbb{\ast }}\mathbb{)}$.
\end{definition}

\begin{definition}
\label{star}We let $\ast :V_{\infty }(\mathbb{R)}\rightarrow V_{\infty }(%
\mathbb{K)}$ be the map defined as follows: for every element $x\in
V_{\infty }(\mathbb{R)}$ we set%
\begin{equation*}
x^{\ast }=\lim_{\lambda \rightarrow \Lambda }(\lambda ,x).
\end{equation*}
\end{definition}

\begin{remark}
Following Keisler (see \cite{keisler}), in Def \ref{truzzo} we have called
nonstandard universe just the superstructure embedding; however, in our
approach, probably, it would be more appropriate to call nonstandard
universe the set $V_{\infty }(\mathbb{K)};$ in this case the global picture
would be the following one: the extended universe%
\begin{equation*}
\mathbb{U}_{\mathfrak{L}}=\left[ \mathfrak{L}\times V_{\infty }(\mathbb{R})%
\right] \uplus V_{\infty }(\mathbb{K})
\end{equation*}%
contains pairs $(\lambda ,x)$ and elements of the nostandard universe $%
V_{\infty }(\mathbb{K});$ the latter contains the following objects:

\begin{itemize}
\item standard elements, namely objects $x\in V_{\infty }(\mathbb{R})\subset
V_{\infty }(\mathbb{K)}$;

\item nonstandard elements, namely objects $x\in V_{\infty }(\mathbb{K)}%
\backslash V_{\infty }(\mathbb{R})$;

\item hyperimages, namely objects $x$ such that there exists $y\in V_{\infty
}(\mathbb{R})$ with $x=y^{\ast }$;

\item internal objects, namely $\Lambda $-limits of bounded functions;

\item external objects.
\end{itemize}
\end{remark}

To give some examples: 7, $\mathbb{R}$, $\mathcal{P}(\mathbb{R}\times 
\mathcal{P}(\mathbb{R}))$ are all standard elements; 7 is also an
hyperimage, while $\mathbb{R}$, $\mathcal{P}(\mathbb{R}\times \mathcal{P}(%
\mathbb{R}))$ are not; $\mathbb{K},$ $\mathcal{P}(\mathbb{R)}^{\ast }$ and $%
\lim_{\lambda \rightarrow \Lambda }(\lambda ,\varphi (\lambda ))$ for every $%
\varphi :\mathfrak{L}\rightarrow \mathbb{R}$ which is not eventually
constant are nonstandard elements, and they are all internal; $\mathbb{R}$
and $\mathbb{K\setminus R}$ are external objects.

An interesting class of internal objects, particularly important for our
applications to PDEs, is that of hyperfinite objects\footnote{%
See e.g. \cite{Albe}, where many different applications of hyperfinite
objects and other nonstandard tools are developed}:

\begin{definition}
An object $\xi \in V_{\infty }(\mathbb{K)}$ is hyperfinite if there exists a
natural number $n$ and a bounded function $\varphi :\mathfrak{L}\rightarrow 
\mathcal{P}_{fin}(V_{n}(\mathbb{R))}$ such that $\xi =\lim_{\lambda
\rightarrow \Lambda }(\lambda ,\varphi (\lambda )).$
\end{definition}

Hyperfinite objects are the analogue, in the universe $V_{\infty }(\mathbb{K}%
),$ of finite objects in $V_{\infty }(\mathbb{R}).$ The notion of
hyperfinite object will be used in Section \ref{GS} to show some
applications of $\Lambda $-theory.

To detail the relationship between $\Lambda $-theory and nonstandard
universes in the sense of Keisler we need to specify how we interpret
formulas in $V_{\infty }(\mathbb{K})$\footnote{%
Once again, it should be evident to readers expert in NSA that our
definition is precisely analogue to the one that is given for ultrapowers.}:

\begin{definition}
\label{interpretazione}Let $p(x_{1},\dots ,x_{n})\in \mathcal{L}$ be a
bounded formula having $x_{1},\dots ,x_{n}$ as its only free variables. Let $%
\xi _{1}=\lim_{\lambda \rightarrow \Lambda }(\lambda ,\varphi _{1}(\lambda
)),\dots ,\xi _{n}=\lim_{\lambda \rightarrow \Lambda }(\lambda ,\varphi
_{n}(\lambda )).$ We say that $p^{\ast }(\xi _{1},\dots ,\xi _{n})$ holds in 
$V_{\infty }(\mathbb{K})$ iff $p(\varphi _{1}(\lambda ),\dots ,\varphi
_{n}(\lambda ))$ is eventually true in $V_{\infty }(\mathbb{R})$, namely iff%
\begin{equation*}
\{(\lambda,(\varphi_{1}(\lambda),\dots,\varphi_{n}(\lambda))\mid p(\varphi
_{1}(\lambda ),\dots ,\varphi _{n}(\lambda )) \ \text{holds in} \ V_{\infty
}(\mathbb{R})\}\cup \{(\xi_{1},\dots,\xi_{n})\}
\end{equation*}
is open in $\sigma_{\mathcal{U}}$.
\end{definition}

\begin{theorem}
\label{keisler}Let $\ast $ be defined as in Def \ref{star}; then 
\begin{equation*}
\left( V_{\infty }(\mathbb{R)},V_{\infty }(\mathbb{K)},\ast \right)
\end{equation*}%
is a nonstandard universe.
\end{theorem}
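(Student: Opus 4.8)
The plan is to verify the two clauses of Definition \ref{truzzo}, i.e.\ that $\ast$ of Def \ref{star} is a superstructure embedding and that it satisfies Leibniz' Principle. I would first dispatch the superstructure embedding conditions of Def \ref{SE}. For condition (1): $r^{\ast}=\lim_{\lambda\rightarrow\Lambda}(\lambda,r)=r$ for $r\in\mathbb{R}$ by property (\ref{quattro}) of the $(\mathfrak{L},\mathcal{U})$-completion, and $\mathbb{R}^{\ast}=\mathbb{K}$ properly contains $\mathbb{R}$ by the Remark following (\ref{pucci}) together with Cor \ref{beppino} (in the nontrivial case of a countably incomplete ultrafilter, which we assume). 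For condition (2), the "only if" direction ($x\in y\Rightarrow x^{\ast}\in y^{\ast}$) is immediate from Def \ref{def}: the constant function $\lambda\mapsto x$ witnesses $x^{\ast}\in y^{\ast}$ since $x\in y$ for every $\lambda$. The converse ($x^{\ast}\in y^{\ast}\Rightarrow x\in y$) and the injectivity of $\ast$ I would prove simultaneously by induction on the rank, using that $\Lambda$-limits of eventually-constant functions are constant (this is essentially the content of condition (\ref{sei}) of Thm \ref{nuovo} at level $0$, lifted through Def \ref{def}): if $x^{\ast}\in y^{\ast}$ then some $\psi$ with $\psi(\lambda)\in y$ eventually has $\lim(\lambda,\psi(\lambda))=x^{\ast}$, and the inductive hypothesis forces $\psi$ to be eventually equal to the constant $x$, whence $x\in y$ eventually, i.e.\ $x\in y$.

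The substance of the proof is Leibniz' Principle: for every bounded sentence $p\in\mathcal{L}$, $p$ holds in $V_{\infty}(\mathbb{R})$ iff $p^{\ast}$ holds in $V_{\infty}(\mathbb{K})$, where $p^{\ast}$ is interpreted via Def \ref{interpretazione}. I would prove the more general statement by induction on the complexity of bounded \emph{formulas}: for every bounded formula $p(x_{1},\dots,x_{n})$ and all internal $\xi_{i}=\lim_{\lambda\rightarrow\Lambda}(\lambda,\varphi_{i}(\lambda))$, $p^{\ast}(\xi_{1},\dots,\xi_{n})$ holds iff $\{\lambda\mid p(\varphi_{1}(\lambda),\dots,\varphi_{n}(\lambda))\text{ holds in }V_{\infty}(\mathbb{R})\}\in\mathcal{U}$. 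The atomic cases $x_{i}=x_{j}$ and $x_{i}\in x_{j}$ are exactly the content of condition (\ref{sei}) of Thm \ref{nuovo} (lifted through Def \ref{def}) and of the embedding condition (2) just established, respectively; more precisely the equality case follows from (\ref{sei}) applied levelwise and the membership case from an analysis of Def \ref{def} showing $\lim(\lambda,\varphi(\lambda))\in\lim(\lambda,\psi(\lambda))$ iff $\varphi(\lambda)\in\psi(\lambda)$ eventually. The Boolean connectives are handled using that $\mathcal{U}$ is an ultrafilter: for negation, $\{\lambda\mid \neg p\} = \{\lambda\mid p\}^{c}$, and exactly one of a set and its complement lies in $\mathcal{U}$; for conjunction, $\mathcal{U}$ is closed under finite intersection and supersets.

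The one genuinely delicate step is the quantifier case — say a bounded existential $(\exists x\in y)\,q(x,x_{1},\dots,x_{n})$. The "$\Leftarrow$" direction (from a qualified set of $\lambda$'s with a witness to an internal witness) requires producing, from a family $\langle a_{\lambda}\rangle$ of witnesses indexed by a qualified set $Q$, a single bounded function $\psi:\mathfrak{L}\rightarrow V_{\infty}(\mathbb{R})$ with $\psi(\lambda)=a_{\lambda}$ on $Q$ (and anything, say a fixed element of the relevant $V_{n}(\mathbb{R})$, off $Q$); then $\xi:=\lim_{\lambda\rightarrow\Lambda}(\lambda,\psi(\lambda))$ is internal, lies in $y^{\ast}$ by the membership analysis since $\psi(\lambda)\in y$ eventually, and $q^{\ast}(\xi,\xi_{1},\dots,\xi_{n})$ holds by the inductive hypothesis. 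The boundedness of $\psi$ — needed so that $\xi$ is internal in the sense of Def \ref{def} — is exactly why we restricted to \emph{bounded} sentences: a bounded quantifier $\exists x\in y$ confines the witnesses to $y$, hence to some fixed $V_{n}(\mathbb{R})$, so $\psi$ is automatically bounded. The "$\Rightarrow$" direction is easier: an internal witness $\xi\in y^{\ast}$ with $q^{\ast}(\xi,\cdot)$ gives, via the inductive hypothesis and the membership analysis, a qualified set on which $\psi(\lambda)\in y$ and $q(\psi(\lambda),\varphi_{1}(\lambda),\dots)$ both hold, which is precisely a qualified set of $\lambda$'s witnessing the existential downstairs. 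The universal quantifier case is then the dual, obtained by applying the existential case to $\neg q$ and using the negation clause. Finally, specializing the formula-level induction to sentences ($n=0$), with the convention that the "qualified set" is all of $\mathfrak{L}$ when the sentence is true and $\emptyset$ when false (and $\mathfrak{L}\in\mathcal{U}$, $\emptyset\notin\mathcal{U}$), yields exactly Leibniz' Principle, completing the proof that $(V_{\infty}(\mathbb{R}),V_{\infty}(\mathbb{K}),\ast)$ is a nonstandard universe. I would also remark that the interpretation in Def \ref{interpretazione} is well defined — independent of the representing functions $\varphi_{i}$ — which follows from condition (\ref{sei}): if $\varphi_{i}$ and $\varphi_{i}'$ represent the same $\xi_{i}$ they are eventually equal, and $\mathcal{U}$-closure under intersection with the qualified sets of agreement shows the two truth-set conditions are equivalent.
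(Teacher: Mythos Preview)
Your proposal is correct and follows the same underlying strategy as the paper: verify the superstructure-embedding clauses and then establish Leibniz' Principle by induction on the complexity of bounded formulas, the crux being the equivalence ``$p^{\ast}(\xi_{1},\dots,\xi_{n})$ holds iff $\{\lambda\mid p(\varphi_{1}(\lambda),\dots,\varphi_{n}(\lambda))\}\in\mathcal{U}$''.

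The difference is one of packaging rather than substance. The paper's proof is terse: it records the chain of equivalences linking truth in $V_{\infty}(\mathbb{K})$ (via Def \ref{interpretazione}) to truth in the ultrapower $\mathbb{R}_{\mathcal{U}}^{\mathfrak{L}}$, and then simply asserts that the induction on complexity goes through --- in effect delegating the work to the classical \L o\'{s} theorem for ultrapowers. You instead carry out that induction explicitly and self-containedly within the $\Lambda$-theory language, spelling out the atomic, Boolean, and bounded-quantifier steps (and correctly identifying the quantifier step as the only nontrivial one, with boundedness of the witness function guaranteed by the boundedness of the quantifier). Your version is longer but does not rely on an external appeal to \L o\'{s}; the paper's version is shorter but leans on the reader recognizing the ultrapower reduction. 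Either is a complete proof.
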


\begin{proof} That $\ast :V_{\infty }(\mathbb{R)}\rightarrow V_{\infty }(%
\mathbb{K)}$ is a superstructure embedding follows clearly from the
definitions. \par
Moreover, for every bounded formula $p(x_{1},\dots ,x_{n})\in 
\mathcal{L}$ having $x_{1},\dots ,x_{n}$ as its only free variables, for
every $\xi _{1}=\lim_{\lambda \rightarrow \Lambda }(\lambda ,\varphi
_{1}(\lambda )),\dots ,\xi _{n}=\lim_{\lambda \rightarrow \Lambda }(\lambda
,\varphi _{n}(\lambda )),$ we have that%
\begin{equation*}
\begin{array}{c}
p(\xi _{1},\dots ,\xi _{n})\text{ holds in }V_{\infty }(\mathbb{K}%
)\Leftrightarrow \\ 
\{(\lambda,(\varphi_{1}(\lambda),\dots,\varphi_{n}(\lambda))\mid p(\varphi _{1}(\lambda ),\dots ,\varphi
_{n}(\lambda ))\text{ holds in }V_{\infty }(\mathbb{R})\}\cup \{(\xi_{1},\dots,\xi_{n})\}\\ \ \text{is open in} \ \sigma_{\mathcal{U}}
\Leftrightarrow \\
\{\lambda \in \mathcal{L}\mid p(\varphi _{1}(\lambda ),\dots ,\varphi
_{n}(\lambda ))\text{ holds in }V_{\infty }(\mathbb{R})\}\in \mathcal{U}%
\Leftrightarrow \\ 
p([\varphi _{1}],\dots ,[\varphi _{n}])\text{ holds in }\mathbb{R}_{\mathcal{%
U}}^{\mathfrak{L}}.%
\end{array}%
\end{equation*}%
This equivalence can be used to easily prove the transfer property for $\ast
:V_{\infty }(\mathbb{R)}\rightarrow V_{\infty }(\mathbb{K)}$ by induction on
the complexity of formulas. \end{proof}

\subsection{General remarks}

Theorem \ref{keisler} precises the intuition that the topological approach
to non-Archimedean mathematics given by $\Lambda $-theory is closely related
with NSA as presented by Keisler in \cite{keisler}. As we said in the
introduction, we think to $\Lambda$-theory as a way to present to a
non-expert reader many basic ideas of NSA in a more familiar language.
Nevertheless, we think that from a philosophical point of view point there
are some differences between $\Lambda$-theory and the ultrapowers approach:

\begin{enumerate}
\item in $\Lambda $-theory we assume the existence of a unique mathematical
universe $\mathbb{U}_{\mathfrak{L}}\subset V_{\infty }(\mathfrak{L}\cup 
\mathbb{K)}$. Inside this universe there are entities that do not appear in
traditional mathematics but that can be obtained as limits of traditional
objects, namely the internal elements. Moreover, there are also external
objects, and some of them are objects of traditional mathematics (e.g., $%
\mathbb{R}$);

\item in NSA the primitive concept is that of hyperimage, the other concepts
(e.g., the concept of internal object) are derived by that one; in $\Lambda $%
-theory, the primitive concept is that of $\Lambda $-limit, while the
concept of hyperimage is derived by the limit. So, within $\Lambda $-theory
the notion of internal object (being defined as a $\Lambda $-limit) is more
primitive than that of hyperimage;

\item the construction of the hyperreal field in our approach has a
topological "flavour" which is similar to other constructions in traditional
mathematics. In fact, e.g. whitin our approach the construction of $\mathbb{R%
}^{\ast }$ as "set of limits of functions with values in $\mathfrak{L}\times 
\mathbb{R}$" has some similarities with the construction of $\mathbb{R}$ as
set of limits of Cauchy sequences with values in $\mathbb{Q}$.
\end{enumerate}

\section{Generalized Solutions\label{GS}}

In many circumstances, the notion of function is not sufficient to the needs
of a theory and it is necessary to extend it. Many different constructions
have been considered in the literature to deal with this problem, both with
standard (for example, Colombeau's Theory, see e.g. \cite{Gro} and
references therein for a complete presentation of the theory and \cite{GioLu}
and reference therein for some new developments of the theory with
applications to generalized ODE's) and nonstandard techniques (see e.g. \cite%
{Rob2}). In this section we want to apply $\Lambda $-theory to construct
spaces of generalized functions called ultrafunctions (see also \cite{ultra}%
, \cite{belu2012}, \cite{belu2013}, \cite{milano}, \cite{algebra}, \cite%
{beyond}, \cite{gauss}), and to use them to study a simple class of problems
in calculus of variations. As we are going to show, ultrafunctions are
constructed by means of a particular version of the hyperfinite approach
which can be naturally introduced by means of $\Lambda $-theory.

In this section we will use the following shorthand notation: for every
bounded function $\varphi :\mathfrak{L}\rightarrow V_{\infty }(\mathbb{R})$
we let 
\begin{equation*}
\lim_{\lambda \uparrow \Lambda }\varphi (\lambda ):=\lim_{\lambda
\rightarrow \Lambda }\left( \lambda ,\varphi (\lambda )\right) .
\end{equation*}

\subsection{Ultrafunctions}

Let $N$ be a natural number, let $\Omega $ be a set in $\mathbb{R}^{N}$ and
let $V(\Omega )$ be a function vector space. We want to define the space of
ultrafunctions generated by $V(\Omega ).$ We assume that%
\begin{equation*}
\mathfrak{L}=\mathcal{P}_{fin}\left( V(\Omega)\right),
\end{equation*}

and we let $\mathcal{U}$ be a fine ultrafilter\footnote{%
Let us recall that an ultrafilter $\mathcal{U}$ on $\mathfrak{L}$ is fine if
for every $\lambda\in\mathfrak{L}$ the set $\{\mu\in\mathfrak{L}\mid
\mu\subseteq\lambda\}\in\mathcal{U}$. We also point out that, for more
complicated applications, it would be better to take $\mathfrak{L}=\mathcal{P%
}_{fin}\left( V_{\infty }(\mathbb{R})\right)$.} on $\mathfrak{L}$. For any $%
\lambda \in \mathfrak{L},$ we set%
\begin{equation*}
V_{\lambda }(\Omega )=Span\left\{ \lambda \cap V(\Omega )\right\} .
\end{equation*}

Let us note that, by construction, $V_{\lambda }(\Omega )$ is a finite
dimensional vector subspace of $V(\Omega )$.

\begin{definition}
Given the function space $V(\Omega )$ we set%
\begin{equation*}
V_{\Lambda }(\Omega ):=\lim_{\lambda \uparrow \Lambda }V_{\lambda }(\Omega
)=\left\{ \lim_{\lambda \uparrow \Lambda }u_{\lambda }~|~u_{\lambda }\in
V_{\lambda }(\Omega )\right\} .
\end{equation*}%
$V_{\Lambda }(\Omega )$ will be called the \textbf{space of ultrafunctions}
generated by $V(\Omega ).$
\end{definition}

Given any vector space of functions $V(\Omega )$, we have the following
three properties:

\begin{enumerate}
\item the ultrafunctions in $V_{\Lambda }(\Omega )$ are $\Lambda $-limits of
functions valued in $V(\Omega )$, so they are all internal functions$;$

\item the space of ultrafunctions $V_{\Lambda }(\Omega )$ is a vector space
of hyperfinite dimension;

\item if we identify a function $f$ with its natural extension $f^{\ast }$
then $V_{\Lambda }(\Omega )$ includes $V(\Omega ),$ hence we have that 
\begin{equation*}
V(\Omega )\subset V_{\Lambda }(\Omega )\subset V(\Omega ){^{\ast }.}
\end{equation*}
\end{enumerate}

\begin{remark}
\label{nina}Notice that the natural extension $f^{\ast }$ of a function $f$
is an ultrafunction if and only if $f\in V(\Omega ).$
\end{remark}

\begin{proof}The proof of this result is trivial\footnote{Any interested reader can find it in \cite{belu2013}.}. \end{proof}

Ultrafunctions can be used to give generalized solutions to some problems in
the calculus of variations (see e.g. \cite{milano}). Usually this kind of
problems have a "natural space" where to look for solutions: the appropriate
function space has to be a space in which the problem is well posed and
(relatively) easy to solve. For a very large class of problems the natural
space is a Sobolev space. However, many times even the best candidates to be
natural spaces are inadequate to study the problem, since there is no
solution in them. So the choice of the appropriate function space is part of
the problem itself; this choice is somewhat arbitrary and it might depend on
the final goals. In the framework of ultrafunctions this situation persists.
The general rule is: choose the "natural space" $V(\Omega )$ and look for a
generalized solution in $V_{\Lambda }(\Omega ).$ For many applications, an
hypothesis\footnote{%
E.g., in \cite{algebra} a (slightly modified) version of this hypothesis is
used to construct an embedding of the space of distributions in a particular
algebra of functions constructed by means of ultrafunctions.} that we need
to assume is that $D(\Omega )\subseteq V(\Omega )\subseteq L^{2}(\Omega ).$
In this case, since $V_{\Lambda }(\Omega )\subset \left[ L^{2}(\Omega)\right]
^{\ast },$ we can equip $V_{\Lambda }(\Omega )$ with the following scalar
product:%
\begin{equation}
\left( u,v\right) =\int^{\ast }u(x)v(x)\ dx,  \label{inner}
\end{equation}%
where $\int^{\ast }$ is the natural extension of the Lebesgue integral
considered as a functional%
\begin{equation*}
\int :L^{1}(\Omega )\rightarrow {\mathbb{R}}.
\end{equation*}%
The norm\footnote{%
Let us observe that both the scalar product and the norm take values in $%
\mathbb{R}^{\ast}$} of an ultrafunction will be given by 
\begin{equation*}
\left\Vert u\right\Vert =\left( \int^{\ast }|u(x)|^{2}\ dx\right) ^{\frac{1}{%
2}}.
\end{equation*}

Moreover, using the inner product (\ref{inner}), we can identify $%
L^{2}(\Omega )$ with a subset of $V^{\prime }(\Omega )$ and hence $\left[
L^{2}(\Omega )\right] ^{\ast }$ with a subset of $\left[ V^{\prime }(\Omega )%
\right] ^{\ast };$ in this case, $\forall f\in \left[ L^{2}(\Omega )\right]
^{\ast },\ $we let $\widetilde{f}$ be the unique ultrafunction such that, $%
\forall v\in V_{\Lambda }(\Omega ),$ 
\begin{equation*}
\int^{\ast} \widetilde{f}(x)v(x)\ dx=\int^{\ast} f(x)v(x)\ dx,
\end{equation*}%
namely we associate to every $f\in L^{2}(\Omega )^{\ast }$ the function $%
\widetilde{f}=P_{\Lambda }(f),$ where 
\begin{equation*}
P_{\Lambda }:\left[ L^{2}(\Omega )\right] ^{\ast }\rightarrow V_{\Lambda
}(\Omega )
\end{equation*}%
is the orthogonal projection.

\begin{remark}
There are a few different ways to prove the existence of an orthogonal
projection of $L^{2}(\Omega)^{\ast}$ on $V_{\Lambda}(\Omega)$. For example,
consider, for every $\lambda\in\mathfrak{L}$, the orthogonal projection $%
P_{\lambda}:L^{2}(\Omega)\rightarrow V_{\lambda}(\Omega)$. Let $%
F:=\lim_{\lambda \uparrow \Lambda } P_{\lambda}$. It is immediate to see
that $F:L^{2}(\Omega)^{\ast}\rightarrow V_{\Lambda}(\Omega)$ is an
orthogonal projection.
\end{remark}

Let us note that the key property to associate an ultrafunction to every
function in $\left[ L^{2}(\Omega )\right] ^{\ast }$ is that $\left[
L^{2}(\Omega )\right] ^{\ast }$ can be identified with a subset of $\left[
V^{\prime }(\Omega )\right] ^{\ast }.$ Therefore, using a similar idea, it
is also possible to extend a large class of operators:

\begin{definition}
Given an operator%
\begin{equation*}
\mathcal{A}:V(\Omega )\rightarrow V^{\prime }(\Omega ),
\end{equation*}%
we can extend it to an operator%
\begin{equation*}
\widetilde{\mathcal{A}}:V_{\Lambda }(\Omega )\rightarrow V_{\Lambda }(\Omega
)
\end{equation*}%
in the following way: given an ultrafunction $u,$ $\mathcal{A}_{\Lambda }(u)$
is the unique ultrafunction such that%
\begin{equation*}
\forall v\in V_{\Lambda }(\Omega ),\ \int^{\ast} \widetilde{\mathcal{A}}%
(u)vdx\ =\int^{\ast} \mathcal{A}^{\ast }(u)vdx;
\end{equation*}%
namely%
\begin{equation*}
\widetilde{\mathcal{A}}=P_{\Lambda }\circ \mathcal{A}^{\ast },
\end{equation*}%
where $P_{\Lambda }$ is the canonical projection.
\end{definition}

This association can be used, e.g., to define the derivative of an
ultrafunction, by setting 
\begin{equation*}
Du:=\widetilde{\partial }u=P_{\Lambda }(\partial ^{\ast }u)
\end{equation*}%
for every ultrafunction $u\in V_{\Lambda }(\Omega )\cap \mathcal{C}%
^{1}(\Omega )^{\ast }$.

\subsection{Applications to calculus of variations}

\label{APPLICAZIONE}

To give an example of application of ultrafunctions to calculus of
variations, we will show the ultrafunction interpretation of the Lavrentiev
phenomenon. Let us consider the following problem: minimize the functional

\begin{equation*}
J_{0}(u)=\int_{0}^{1}\left[ \left( \left\vert \nabla u\right\vert
^{2}-1\right) ^{2}+|u|^{2}\right] dx
\end{equation*}%
in the function space $\mathcal{C}_{0}^{1}(\Omega )=\mathcal{C}^{1}(\Omega
)\cap \mathcal{C}_{0}(\overline{\Omega }).$ We assume $\Omega $ to be
bounded to avoid problems of summability\footnote{%
This example has already been studied in greater detail in \cite{milano}.}.

It is not difficult to realize that any minimizing sequence $u_{n}$
converges uniformly to $0$ and that $J_{0}(u_{n})\rightarrow 0,$ but $%
J_{0}(0)>0$ for any $u\in \mathcal{C}_{0}^{1}(0,1)$. Hence there is no
minimizer in $\mathcal{C}^{1}_{0}(\Omega)$.

On the contrary, it is possible to show that this problem has a minimizer in
the space of ultrafunctions%
\begin{equation*}
V_{0}^{1}(\Omega )=\left[ \mathcal{C}^{1}(\Omega )\cap \mathcal{C}_{0}(%
\overline{\Omega })\right] _{\Lambda }.
\end{equation*}%
In $V_{0}^{1}(\Omega)$ our problem becomes%
\begin{equation}
\text{find }v\in V_{0}^{1}(\Omega )\text{ s.t. }\widetilde{J_{0}}(v)=%
\underset{u\in V_{0}^{1}(\Omega )}{\min }\widetilde{J_{0}}(u).  \tag{P}
\end{equation}%
To solve (P), let us prove the following "ultrafunction version" of an
existence result for minimizers of coercive continuous operators; the proof
is based on a variant of Faedo-Galerkin method.

\begin{theorem}
\label{B}Let $V(\Omega )\subseteq L^{2}(\Omega )$ be a vector space and let%
\begin{equation*}
J:V\left( \Omega \right) \rightarrow \mathbb{R}
\end{equation*}%
be an operator continuous and coercive on finite dimensional spaces. Then
the operator%
\begin{equation*}
\widetilde{J}:V_{\Lambda }\left( \Omega \right) \rightarrow \mathbb{R}^{\ast
}
\end{equation*}%
has a minimum point. If $J$ itself has a minimizer $u,$ then $u^{\ast }$ is
a minimizer of $\widetilde{J}.$
\end{theorem}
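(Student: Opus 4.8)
The plan is to reduce the statement about $\widetilde{J}$ on the hyperfinite-dimensional space $V_\Lambda(\Omega)$ to classical facts about continuous coercive functions on finite-dimensional spaces, applied $\lambda$-wise, and then transported through the $\Lambda$-limit. Concretely, for each $\lambda\in\mathfrak{L}$ consider the finite-dimensional subspace $V_\lambda(\Omega)=\mathrm{Span}\{\lambda\cap V(\Omega)\}$ and the restriction $J|_{V_\lambda(\Omega)}$. Since $J$ is continuous and coercive on finite-dimensional spaces, the sublevel sets $\{u\in V_\lambda(\Omega)\mid J(u)\le J(0)\}$ are closed and bounded, hence compact, so $J|_{V_\lambda(\Omega)}$ attains a minimum at some point $u_\lambda\in V_\lambda(\Omega)$. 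Thus for every $\lambda$ the sentence ``$u_\lambda\in V_\lambda(\Omega)$ and $\forall v\in V_\lambda(\Omega),\ J(u_\lambda)\le J(v)$'' holds in $V_\infty(\mathbb{R})$.

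Next I would take $\Lambda$-limits. Set $\widetilde{u}:=\lim_{\lambda\uparrow\Lambda}u_\lambda$, which is an element of $V_\Lambda(\Omega)$ by definition of the space of ultrafunctions. By Theorem \ref{keisler} (Leibniz' Principle / the interpretation in Def \ref{interpretazione}), since the minimality property holds eventually, its $\ast$-transform holds in $V_\infty(\mathbb{K})$: $\widetilde{u}\in V_\Lambda(\Omega)$ and $\forall v\in V_\Lambda(\Omega),\ \widetilde{J}(\widetilde{u})\le \widetilde{J}(v)$, where one uses that $\widetilde{J}=J^\ast$ restricted to $V_\Lambda(\Omega)$ (this is the meaning of $\widetilde{J}$ here, since $J$ is real-valued so $P_\Lambda\circ J^\ast=J^\ast$ on ultrafunctions). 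Indeed, for any $v\in V_\Lambda(\Omega)$ we can write $v=\lim_{\lambda\uparrow\Lambda}v_\lambda$ with $v_\lambda\in V_\lambda(\Omega)$, and then $\widetilde{J}(v)=\lim_{\lambda\uparrow\Lambda}J(v_\lambda)\ge\lim_{\lambda\uparrow\Lambda}J(u_\lambda)=\widetilde{J}(\widetilde{u})$ by (\ref{pucci}), because $J(u_\lambda)\le J(v_\lambda)$ for every $\lambda$. Hence $\widetilde{u}$ is a minimum point of $\widetilde{J}$.

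For the last assertion, suppose $J$ has a classical minimizer $u\in V(\Omega)$, so $\forall v\in V(\Omega),\ J(u)\le J(v)$. Applying the natural extension and transfer, $\forall v\in V(\Omega)^\ast,\ J^\ast(u^\ast)\le J^\ast(v)$; in particular this holds for all $v\in V_\Lambda(\Omega)\subseteq V(\Omega)^\ast$, and $u^\ast\in V_\Lambda(\Omega)$ by Remark \ref{nina} since $u\in V(\Omega)$. Therefore $u^\ast$ minimizes $\widetilde{J}$ on $V_\Lambda(\Omega)$.

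The main obstacle, and the point that deserves care rather than being waved through, is the bookkeeping needed to apply transfer correctly: one must phrase ``$J$ restricted to $V_\lambda(\Omega)$ has a minimum'' as a \emph{bounded} formula with parameters $J$ and the finite set $\lambda$ (using $V_\lambda(\Omega)=\mathrm{Span}\{\lambda\cap V(\Omega)\}$ as a definable object), check that the $\Lambda$-limit of the map $\lambda\mapsto V_\lambda(\Omega)$ is exactly $V_\Lambda(\Omega)$ and that of $\lambda\mapsto u_\lambda$ lands in it, and verify that $\widetilde{J}$ as defined earlier agrees with $J^\ast|_{V_\Lambda(\Omega)}$ when $J$ is scalar-valued. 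The coercivity hypothesis enters only to guarantee each $u_\lambda$ exists (compactness of sublevel sets in finite dimensions); no coercivity of $\widetilde{J}$ on the hyperfinite-dimensional space is needed, which is precisely why the hyperfinite/Faedo--Galerkin approach succeeds where the classical direct method fails.
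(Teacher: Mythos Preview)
Your proof is correct and follows essentially the same Faedo--Galerkin strategy as the paper: pick a minimizer $u_\lambda$ of $J|_{V_\lambda(\Omega)}$ for each $\lambda$, pass to the $\Lambda$-limit, and compare against an arbitrary $v=\lim_{\lambda\uparrow\Lambda}v_\lambda$ via the pointwise inequality $J(u_\lambda)\le J(v_\lambda)$.

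The one place you diverge is the final clause. The paper argues that if $J$ has a global minimizer $\overline{u}$, then ``$u_\lambda$ is eventually equal to $\overline{u}$ and hence $u_\Lambda=\overline{u}^{\ast}$''; this tacitly assumes one chooses $u_\lambda=\overline{u}$ whenever $\overline{u}\in V_\lambda(\Omega)$ (which is eventual by fineness of $\mathcal{U}$), since minimizers on $V_\lambda$ need not be unique. Your route---transfer the global inequality $\forall v\in V(\Omega),\ J(u)\le J(v)$ to $V(\Omega)^{\ast}$ and restrict to $V_\Lambda(\Omega)$, then invoke Remark~\ref{nina} to place $u^{\ast}$ in $V_\Lambda(\Omega)$---is cleaner and yields exactly the stated conclusion (that $u^{\ast}$ is \emph{a} minimizer) without any bookkeeping about the particular net $(u_\lambda)$.
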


\begin{proof} Take $\lambda \in \mathfrak{L}$; since the operator 
\begin{equation*}
J|_{V_{\lambda }}:V_{\lambda }(\Omega )\longrightarrow \mathbb{R}
\end{equation*}

is continuous and coercive, it has a minimizer; namely%
\begin{equation*}
\exists u_{\lambda }\in V_{\lambda }\text{ }\forall v\in V_{\lambda }\text{ }%
J(u_{\lambda })\leq J(v).
\end{equation*}

We set%
\begin{equation*}
u_{\Lambda }=\lim_{\lambda \uparrow \Lambda }u_{\lambda }.
\end{equation*}

We show that $u_{\Lambda }$ is a minimizer of $\widetilde{J}.$ Let $v\in
V_{\Lambda }\left( \Omega \right) .$ Let us suppose that $v=\lim_{\lambda
\uparrow \Lambda }v_{\lambda };$ then by construction%
\begin{equation*}
\forall \lambda \in \mathfrak{L}\text{ }J(u_{\lambda })\leq J(v_{\lambda }),
\end{equation*}%
therefore%
\begin{equation*}
\widetilde{J}(u_{\Lambda })\leq \widetilde{J}(v).
\end{equation*}

If $J$ itself has a minimizer $\overline{u}$, then $u_{\lambda }$ is
eventually equal to $\overline{u}$ and hence $u_{\Lambda }=\overline{u}%
^{\ast }.$\end{proof} 

As a consequence, problem (P) has a solution, since the functional $J_{0}$
satisfies the hypothesis of Thm \ref{B}. So there exists an ultrafunction $%
u\in V_{0}^{1}(\Omega )$ that minimizes $\widetilde{J_{0}}.$ Moreover, it
can be represented as the $\Lambda $-limit of a function of minimizers of
the approximate problems on the spaces $\left[ \mathcal{C}^{1}(\Omega )\cap 
\mathcal{C}_{0}(\overline{\Omega })\right] _{\lambda }$. By using this
characterization, it is also possible to derive some qualitative properties
of $u$, e.g. it is not difficult to show that, $\forall x\in (0,1)^{\ast },$
the minimizer $u_{\Lambda }(x)\sim 0$ and that $\widetilde{J_{0}}(u_{\Lambda
})$ is a positive infinitesimal.

\end{document}